\RequirePackage{fix-cm}

\documentclass[smallextended]{svjour3}       
\smartqed  
\usepackage{graphicx}

\usepackage{epsfig} 
\usepackage{mathptmx} 
\usepackage{times} 
\usepackage{amsmath} 
\usepackage{amssymb}




\usepackage{tikz}
\usetikzlibrary{arrows}
\usetikzlibrary{graphs,decorations.pathmorphing,decorations.markings}
\usetikzlibrary{calc}

\pgfmathsetmacro\weight{1/2}
\pgfmathsetmacro\third{1/3}
\pgfmathsetmacro\twothirds{2/3}

\tikzset{degil/.style={
            decoration={markings,
            mark= at position 0.5 with {
                  \node[transform shape] (tempnode) {$/$};
                  }
              },
              postaction={decorate}
}
}

\usepackage{pdfsync}

\usepackage{enumerate}
\usepackage{marginnote}  







\newcounter{syscounter}
\newenvironment{sysnum}{\begin{list}{($\Sigma{\arabic{syscounter}}$)}%
{\settowidth{\labelwidth}{($\Sigma4$)}
\settowidth{\leftmargin}{($\Sigma4$)~}%
\usecounter{syscounter}}}
{\end{list}}

\newcommand \A   {0}

\newcommand \N   {\mathbb{N}}
\newcommand \R   {\mathbb{R}}

\newcommand \Z   {\mathbb{Z}}

\newcommand \K   {\mathcal{K}}
\newcommand \Kinf{\mathcal{K_\infty}}

\newcommand \KL  {\mathcal{KL}}
\newcommand \LL  {\mathcal{L}}

\newcommand \Dc   {\mathcal{D}}

\newcommand \qrq   {\quad\Rightarrow\quad}
\newcommand \srs   {\,\Rightarrow\,}

\newcommand \Iff   {\Leftrightarrow}

\newcommand \eps {\varepsilon}

\newcommand \Sigmafp {0}

%
%

%
%
%
%
%
%
%
%
%
%
%
%

 \journalname{Mathematics of Control, Signals, and Systems}

\begin{document}

\title{Existence of non-coercive Lyapunov functions is equivalent to integral uniform global asymptotic stability 
}

\titlerunning{Integral UGAS and non-coercive Lyapunov functions
}        


\author{Andrii Mironchenko  \and Fabian Wirth       }



\institute{Andrii Mironchenko (corresponding author)  \at
							Faculty of Computer Science and Mathematics, University of Passau, Innstra\ss e 33, 94032 Passau, Germany\\
              \email{andrii.mironchenko@uni-passau.de}            
           \and
           Fabian Wirth \at
							Faculty of Computer Science and Mathematics, University of Passau, Innstra\ss e 33, 94032 Passau, Germany\\
              Tel.: +49-421-21863825\\
              Fax: +49-421-2189863825\\
              \email{fabian.(lastname)@uni-passau.de}           
}

\date{Received: date / Accepted: date}

\maketitle

\begin{abstract}
In this paper a class of abstract dynamical systems is considered which
encompasses a wide range of nonlinear finite- and infinite-dimensional systems.
We show that the existence of a non-coercive Lyapunov function without any further requirements on the flow of the forward complete system ensures an integral version of uniform global asymptotic stability. We prove that also the converse statement holds without any further requirements on regularity of the system.

Furthermore, we give a characterization of uniform global asymptotic stability in terms of
the integral stability properties and analyze which stability properties
can be ensured by the existence of a non-coercive Lyapunov function, provided either the flow has a kind of uniform continuity near the equilibrium or the system is robustly forward complete.

\keywords{
nonlinear control systems \and infinite-dimensional systems \and Lyapunov methods \and global asymptotic stability
}
\end{abstract}



%
%
%

\section{Introduction}
\label{sec:introduction}

The theory of Lyapunov functions is one of the cornerstones in the
dynamical and control systems theory.  Numerous applications of Lyapunov
theory include characterization of stability properties of fixed points
and more complex attractors \cite{Yos66,Hah67,LSW96,Kel15}, conditions for
forward completeness of trajectories \cite{AnS99}, criteria for the
existence of a bounded absorbing ball \cite[Theorem 2.1.2]{Chu15} etc.  Some of these uses extend from finite-dimensional
applications to the infinite-dimensional case, while others rely on distinct
finite-dimensional arguments.

On the other hand numerous converse results have been obtained which prove
the existence of certain types of Lyapunov functions characterizing
different stability notions. Indeed, before starting to look for a Lyapunov function it is highly desirable to
know in advance that such a Lyapunov function for a given class of systems
exists.  The first results guaranteeing existence of Lyapunov functions
for asymptotically stable systems appeared in the works of Kurzweil
\cite{Kur56} and Massera \cite{Mas56}.  These have been
generalized in different directions, see \cite{Kel15,MiW17a} for references.

The standard definition of a Lyapunov
function $V$, found in many textbooks on finite-dimensional dynamical
systems, is that it should be a continuous (or more regular) positive
definite and proper function, i.e. a function for which there exist
$\Kinf$\footnote{An increasing, unbounded, continuous, positive definite
  function from $\R_+$ to itself that maps $0$ to $0$.} functions $\psi_1,\psi_2,\alpha$ such that
\begin{equation}
    \label{eq:2}
    \psi_1(\|x\|) \leq V(x) \leq \psi_2(\|x\|) \quad \forall x \in X,
\end{equation}
and such that 
\begin{equation}
    \label{eq:3}
    \dot V(x) < - \alpha(\|x\|) \quad \forall x \in X, 
\end{equation}
where $\dot V(x)$ is some sort of generalized derivative of $V$ along the
trajectories of the system.

If $V$ is as above with the exception that instead of \eqref{eq:2}, $V$
satisfies the weaker property
\begin{equation}
    \label{eq:2b}
    0 < V(x) \leq \psi_2(\|x\|)\,,\quad x\neq 0,
\end{equation}
then $V$ is called a non-coercive Lyapunov function.

Noncoercive Lyapunov functions are frequently used in the linear infinite-dimensional systems theory. 
There are at least two reasons for this. On the one hand, using the
generalized Datko lemma \cite{Dat70,Lit89} one can show that the existence of noncoercive Lyapunov functions already proves exponential stability of a linear system (and thus it is not necessary to look for coercive Lyapunov functions). On the other hand, noncoercive Lyapunov functions are in a certain sense even more natural than coercive ones. For example, a classic type of Lyapunov functions for linear exponentially stable systems over Hilbert spaces are quadratic Lyapunov functions constructed by solving the operator Lyapunov equation \cite[Theorem 5.1.3]{CuZ95}. However, solutions of this equation are not coercive in general, and hence the corresponding Lyapunov functions are not coercive as well. 

In spite of these advantages, the usage of non-coercive Lyapunov functions was limited to linear infinite-dimensional systems and to nonlinear time-delay systems, for which the efficient method of Lyapunov-Krasovskii functionals is widely used \cite{HaV93,PeJ06} (Lyapunov-Krasovskii functionals have, however, a different type of noncoercivity, see \cite{MiW17d} for a comparison and discussion). 
Recently the situation has changed: in \cite{MiW17a} the authors have
shown that for a broad class of forward complete \textit{nonlinear}
infinite-dimensional systems existence of a non-coercive Lyapunov function
ensures uniform global asymptotic stability (UGAS) of a system, provided
the flow of the system has a certain uniform continuity at the origin and finite-time reachability sets of the system are bounded.
On the other hand, it was demonstrated in \cite{MiW17a} that without these additional assumptions uniform global asymptotic stability cannot be guaranteed, even for systems of ordinary differential equations (ODEs). In particular, the existence of a non-coercive Lyapunov function alone does not ensure forward completeness of the system (in contrast to coercive Lyapunov functions). 
Hence, although non-coercive Lyapunov functions provide more flexibility
for the stability analysis of dynamical systems, further conditions have to be verified separately.
Another result of \cite{MiW17a} is a construction of a Lipschitz continuous non-coercive Lyapunov function by means of an integration of the solution along trajectories. 

In this paper, we continue the investigations initiated in \cite{MiW17a}. In \textit{our first main result} (Theorem~\ref{thm:direct_Non-coercive_Lyapunov_theorem}), we show that forward complete systems possessing non-coercive Lyapunov functions (even if they do not satisfy any further assumptions) enjoy an ``integral version'' of uniform global asymptotic stability (iUGAS), 
which is a weaker notion than UGAS. 
\textit{Our second result}
(Theorem~\ref{thm:converse_Non-coercive_Lyapunov_theorem}) is a converse
non-coercive Lyapunov theorem for the iUGAS property. Since iUGAS is weaker than UGAS, a coercive Lyapunov function does not exist for such systems in general. 
However, we show (without requiring any further regularity of the flow!) that we can construct a non-coercive Lyapunov function for this system. 
The construction is motivated by \cite{MiW17a} and based upon classic converse theorems and
Yoshizawa's method \cite[Theorem 19.3]{Yos66}, \cite[Theorem 4.2.1]{Hen81}.  
A key tool for achieving our main results are the characterizations of the iUGAS property in terms of weaker stability notions,  developed in Theorem~\ref{thm:iUGAS_characterization}, which is a third notable result in this work.
In Figure~\ref{fig:Stability} we provide a graphical overview of the results obtained in this paper, in particular, the relationship between the introduced stability notions.

Relations between integral and ``classic'' stability notions have been studied in a number of papers.
In particular, in \cite{TPL02} uniform global asymptotic stability of finite-dimensional differential inclusions has been characterized via ``integral'' uniform attractivity.
A natural extension of the iUGAS notion to the case of systems with inputs leads to the nonlinear counterparts of $L_2$-stability (which was originally introduced in the context of linear systems in the seminal work \cite{Zam66}, see also \cite{Sch02}).
In \cite{Son98,KeD16} it was shown that these extensions are equivalent to input-to-state stability for the systems of ordinary differential equations with Lipschitz continuous nonlinearities.

\subsection{Notation}

The following notation will be used throughout. By $\R_+$ we
denote the set of nonnegative real numbers. For an arbitrary set $S$ and
$n\in\N$ the $n$-fold Cartesian product is $S^n:=S\times\ldots\times
S$. 
The open ball in a normed linear space $X$ endowed with the norm $\|\cdot\|_X$ with radius $r$ and center in $y \in X$ is denoted by $B_r(y):=\{x\in X \ |\  \|x-y\|_X<r\}$ (the space $X$ in which the ball is taken, will always be clear from the context). For short, we denote $B_r:=B_r(0)$. The (norm)-closure of a set $S \subset X$ will be denoted by $\overline{S}$.

For the formulation of
stability properties the following classes of comparison functions are
useful, see \cite{Hah67,Kel14}. The set ${\K}$ is the set of functions $\gamma:\R_+
\to \R_+$ that are continuous, strictly increasing and with
$\gamma(0)=0$; $\K_{\infty}$ is the set of unbounded $\gamma\in\K$; $\KL$
  is the set of continuous $\beta: \R_+^2 \to \R_+$, such that
  $\beta(\cdot,t)\in{\K}$, for all $t \geq 0$ and $ \beta(r,\cdot)$ is
  decreasing to $0$ for all $r >0$.

\section{Problem statement}
\label{sec:problem-statement}

We consider abstract axiomatically defined time-invariant
and forward complete systems on the state space $X$ which are subject to a shift-invariant set of
disturbances $\Dc$.
\begin{definition}
\label{Steurungssystem}
Consider the triple $\Sigma=(X,\Dc,\phi)$, consisting of:
\begin{enumerate}[(i)]  
	\item A normed linear space $(X,\|\cdot\|_X)$, called the {state space}, endowed with the norm $\|\cdot\|_X$.
	\item A set of  disturbance values $D$, which is a nonempty subset of a certain normed linear space.
	\item A {space of disturbances} $\Dc \subset \{d:\R_+ \to D\}$
          satisfying the following two axioms.
					
\textit{The axiom of shift invariance} states that for all $d \in \Dc$ and all $\tau\geq0$ the time
shift $d(\cdot + \tau)$ is in $\Dc$.

\textit{The axiom of concatenation} is defined by the requirement that for all $d_1,d_2 \in \Dc$ and for all $t>0$ the concatenation of $d_1$ and $d_2$ at time $t$
\begin{equation}
d(\tau) := 
\begin{cases}
d_1(\tau), & \text{ if } \tau \in [0,t], \\ 
d_2(\tau-t),  & \text{ otherwise},
\end{cases}
\label{eq:Composed_Input}
\end{equation}
belongs to $\Dc$.

	\item A map $\phi:\R_+ \times X \times \Dc \to X$, called the \textit{transition map}.
\end{enumerate}
The triple $\Sigma$ is called a (forward complete) system, if the following properties hold:

\begin{sysnum}
	\item forward completeness: for every $(x,d) \in X \times \Dc$ and
          for all $t \geq 0$ the value $\phi(t,x,d) \in X$ is well-defined.
	\item\label{axiom:Identity} The identity property: for every $(x,d) \in X \times \Dc$
          it holds that $\phi(0, x,d)=x$.
	\item Causality: for every $(t,x,d) \in \R_+ \times X \times
          \Dc$, for every $\tilde{d} \in \Dc$, such that $d(s) =
          \tilde{d}(s)$, $s \in [0,t]$ it holds that $\phi(t,x,d) = \phi(t,x,\tilde{d})$.
	\item \label{axiom:Continuity} Continuity: for each $(x,d) \in X \times \Dc$ the map $t \mapsto \phi(t,x,d)$ is continuous.
		\item \label{axiom:Cocycle} The cocycle property: for all $t,h \geq 0$, for all
                  $x \in X$, $d \in \Dc$ we have
$\phi(h,\phi(t,x,d),d(t+\cdot))=\phi(t+h,x,d)$.
\end{sysnum}
\end{definition}
Here  $\phi(t,x,d)$ denotes the state of the system at the moment $t \in
\R_+$ corresponding to the initial condition $x \in X$ and the disturbance $d \in \Dc$.


We require a stronger version of forward completeness.
\begin{definition}
\label{Def_RFC}
The system $\Sigma=(X,\Dc,\phi)$ is called robustly forward complete (RFC) 
if for any $C>0$ and any $\tau>0$ it holds that 
\begin{equation*}
 \sup\big\{ \|\phi(t,x,d)\|_X \ |\  \|x\|_X\leq C,\: t \in [0,\tau],\: d\in \Dc \big\} < \infty.  
\end{equation*}
\end{definition}
In other words, a system $\Sigma$ is RFC iff its finite-time reachability sets (emanating from the bounded sets) are bounded.

The condition of robust forward completeness is satisfied by large classes of infinite-dimensional
systems. 

\begin{definition}
\label{def:EqPoint}
We call $0 \in X$ an equilibrium point of the system $\Sigma=(X,\Dc,\phi)$, if 
$\phi(t,0,d) = 0$ for all $t \geq 0$, $d \in \Dc$.
\end{definition}

Note that according to the above definition disturbances cannot
move the system out of the equilibrium position.
\begin{definition}
\label{def:RobustEqPoint}
We call $0 \in X$ a robust equilibrium point (REP) of the system
$\Sigma=(X,\Dc,\phi)$, if it is an equilibrium point such that 
for every $\eps >0$ and for any $h>0$ there exists $\delta = \delta
(\eps,h)>0$, satisfying 
\begin{equation}
\hspace{-7mm} t\in[0,h],\ \|x\|_X \leq \delta, \ d \in \Dc \quad \Rightarrow \quad  \|\phi(t,x,d)\|_X \leq \eps.
\label{eq:RobEqPoint}
\end{equation}
\end{definition}

In this paper we investigate the following stability properties of equilibria of
abstract systems.

\begin{definition}{}
    \label{d:stability_new}
Consider a system $\Sigma=(X,\Dc,\phi)$ with a fixed point $0$. The
equilibrium position $0$ is called
\begin{enumerate}[(i)]
  \item uniformly locally stable (ULS), if for every $\eps >0$ there is a $\delta>0$ so that
\begin{eqnarray}
\|x\|_X \leq \delta,\ d\in\Dc,\ t \geq 0 \quad\Rightarrow\quad\|\phi(t,x,d)\|_X \leq \eps.
\label{eq:Uniform_Stability}
\end{eqnarray}
\item uniformly globally asymptotically
stable (UGAS)
if there exists a $\beta \in \KL$ such that 
\begin{equation}
\label{UGAS_wrt_D_estimate}
x \in X,\ d\in \Dc,\ t \geq 0  \srs \|\phi(t,x,d)\|_X \leq \beta(\|x\|_X,t).
\end{equation}
\item uniformly (locally) asymptotically stable (UAS)
if there exist a $\beta \in \KL$ and an $r>0$ such that 
\begin{equation*}
\|x\|_X\leq r,\ d\in \Dc,\ t \geq 0  \srs \|\phi(t,x,d)\|_X \leq \beta(\|x\|_X,t).
\end{equation*}


\item \label{def:UniformGlobalWeakAttractivity} uniformly globally weakly attractive (UGWA), if for every $\eps>0$ and
for every $r>0$ there exists a $\tau = \tau(\eps,r)$ such that for all $\|x\|_X\leq r,\ d\in\Dc$
\begin{equation*}
 \exists t = t(x,d,\eps) \leq \tau:\ \|\phi(t,x,d)\|_X \leq \eps.
\end{equation*}
\item \label{def:UniformGlobalAttractivity} uniformly globally attractive (UGATT), if for any $r,\eps >0$ there exists $\tau=\tau(r,\eps)$ so that
\begin{equation*}
\|x\|_X \leq r,\ d\in\Dc,\ t \geq \tau(r,\eps) \quad \Rightarrow \quad \|\phi(t,x,d)\|_X \leq \eps.
\end{equation*}
\end{enumerate}
\end{definition}
It is clear, that UGAS of $0$ implies UGATT of $0$, which in turn implies UGWA of $0$.

As we will see, in the study of non-coercive Lyapunov functions one
arrives very naturally at ``integral'' versions of the notions stated above:
\begin{definition}
\label{def:iREP}
We call $0 \in X$ an integrally robust equilibrium point (iREP) of the system
$\Sigma=(X,\Dc,\phi)$, if it is an equilibrium point and there is $\alpha\in\K$ such that 
for every $\eps >0$ and for any $h>0$ there exists $\delta = \delta
(\eps,h)>0$, satisfying 
\begin{equation}
\hspace{-7mm} \|x\|_X \leq \delta, \ d \in \Dc \quad \Rightarrow \quad \int_0^h \alpha(\|\phi(s,x,d)\|_X)ds \leq \eps.
\label{eq:iREP}
\end{equation}
\end{definition}

\begin{definition}
\label{Def_iRFC}
For a given $\alpha\in\K$, system $\Sigma=(X,\Dc,\phi)$ is called $\alpha$-integrally robustly forward complete ($\alpha$-iRFC),
if for any $C>0$ and any $\tau>0$ it holds that 
\[
\sup_{x\in\overline{B_C},\ d\in\Dc }    \int_0^\tau \alpha\big(\|\phi(t,x,d)\|_X\big) dt < \infty.
\]
\end{definition}

\begin{remark}
\label{rem:RFC_and_iRFC} 
Note that every forward-complete system is automatically $\alpha$-iRFC for
any bounded $\alpha\in\K$, since
$\int_0^\tau \alpha\big(\|\phi(t,x,d)\|_X\big) dt <\tau \sup_s \alpha(s)$.
On the other hand, if $\Sigma$ is RFC, then $\Sigma$ is also $\alpha$-iRFC for any $\alpha\in\K$.
\end{remark}

\begin{definition}
\label{def:Integral_stability_notions} 
Consider a forward complete system $\Sigma=(X,\Dc,\phi)$ with a fixed
point at $0$. 
The equilibrium position $0$ is called
\begin{enumerate}[(i)]
\item integrally uniformly locally stable (iULS) provided there are $\alpha\in\K$, $\psi\in\Kinf$ and $r>0$ so that 
\begin{equation}
\|x\|_X\leq r,\ d\in\Dc \srs \int_0^\infty \alpha(\|\phi(s,x,d)\|_X)ds  \leq \psi(\|x\|_X).
\label{eq:Integral_boundedness}
\end{equation}
\item integrally uniformly globally stable (iUGS) provided there are $\alpha\in\K$, $\psi\in\Kinf$ so that \eqref{eq:Integral_boundedness} is valid for $r:=\infty$.
\item integrally uniformly globally attractive (iUGATT)  provided there is $\alpha\in\K$ so that
\begin{eqnarray}
\forall \ r>0 \,:\, \lim_{t\to\infty} \sup_{x\in \overline{B_r},\;d\in\Dc} \int_t^\infty \alpha(\|\phi(s,x,d)\|_X)ds  = 0.
\label{eq:integral_UGATT}
\end{eqnarray}
\item integrally uniformly globally asymptotically stable (iUGAS)
  provided there are $\alpha\in\K$ and $\beta\in\KL$ so that for all $x\in
  X,\ d\in\Dc,\ t\geq 0$ we have
\begin{equation}
\int_t^\infty \alpha(\|\phi(s,x,d)\|_X)ds  \leq \beta(\|x\|_X,t).
\label{eq:integral_UGAS}
\end{equation}
\end{enumerate}
\end{definition}

%

\smallskip{}

Properties \eqref{eq:Integral_boundedness} and \eqref{eq:integral_UGATT} resemble a kind of uniform attractivity.
This similarity becomes even more apparent if we rewrite the definition of UGATT in an equivalent form:
\begin{lemma}
\label{lem:UGATT_restatement} 
Let $\Sigma=(X,\Dc,\phi)$ be a forward complete system with fixed point $\Sigmafp$. Then $\Sigmafp$ is UGATT iff there is $\alpha\in\K$ so that 
\begin{eqnarray}
\label{eq:UGATT_restatement}
\lim_{t\to\infty }\sup_{x\in\overline{B_r},\; d\in\Dc}  \alpha\Big(\big\|\phi(\cdot + t,x,d)\big\|_{C(X)}\Big) =0\quad \forall r>0,
\end{eqnarray}
where $\|\phi(\cdot + t,x,d)\|_{C(X)}$ is the $\sup$-norm of the ``tail'' of the trajectory $\phi$ after the time $t$.
\end{lemma}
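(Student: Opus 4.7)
The plan is to prove the two directions of the equivalence directly from the definitions, using only that any $\alpha\in\K$ is continuous at $0$ with $\alpha(0)=0$ and is strictly increasing.

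For the forward direction ``UGATT $\Rightarrow$ \eqref{eq:UGATT_restatement}'', I would fix any specific $\alpha\in\K$ (for instance the identity on $\R_+$). Given $r>0$ and an arbitrary $\eta>0$, I choose $\eps>0$ with $\alpha(\eps)\le\eta$, which is possible by continuity of $\alpha$ at $0$. Applying UGATT with parameters $r$ and $\eps$ produces $\tau(r,\eps)$ such that $\|\phi(t,x,d)\|_X\le\eps$ whenever $t\ge\tau(r,\eps)$, $\|x\|_X\le r$, $d\in\Dc$. For any $t\ge\tau(r,\eps)$ and any $s\ge 0$ one has $s+t\ge\tau(r,\eps)$, hence $\|\phi(s+t,x,d)\|_X\le\eps$; taking the sup over $s\ge 0$ yields $\|\phi(\cdot+t,x,d)\|_{C(X)}\le\eps$, and monotonicity of $\alpha$ gives $\alpha(\|\phi(\cdot+t,x,d)\|_{C(X)})\le\alpha(\eps)\le\eta$ uniformly in $(x,d)$ with $\|x\|_X\le r$, $d\in\Dc$. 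Since $\eta$ was arbitrary, \eqref{eq:UGATT_restatement} follows.

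For the reverse direction ``\eqref{eq:UGATT_restatement} $\Rightarrow$ UGATT'', fix $r>0$ and $\eps>0$. Since $\alpha\in\K$ and $\eps>0$ we have $\alpha(\eps)>0$, so by \eqref{eq:UGATT_restatement} there exists $\tau=\tau(r,\eps)$ with
\[
\sup_{x\in\overline{B_r},\;d\in\Dc}\alpha\bigl(\|\phi(\cdot+\tau,x,d)\|_{C(X)}\bigr)<\alpha(\eps).
\]
In particular $\|\phi(\cdot+\tau,x,d)\|_{C(X)}$ is finite for every admissible $(x,d)$, and strict monotonicity of $\alpha$ forces $\|\phi(\cdot+\tau,x,d)\|_{C(X)}<\eps$. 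Every $t\ge\tau$ can be written as $t=s+\tau$ with $s\ge 0$, whence $\|\phi(t,x,d)\|_X\le\|\phi(\cdot+\tau,x,d)\|_{C(X)}<\eps$, proving UGATT with attraction time $\tau(r,\eps)$.

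I do not anticipate any real obstacle. The only mildly delicate point is that in the reverse direction the tail sup $\|\phi(\cdot+t,x,d)\|_{C(X)}$ is a priori in $[0,\infty]$, but the smallness of $\alpha$ applied to it, combined with strict monotonicity and $\alpha(0)=0$, automatically rules out infinite values for $t$ large, so no separate argument for eventual boundedness of trajectories is needed.
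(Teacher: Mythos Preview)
Your proof is correct and follows essentially the same approach as the paper: the paper's forward direction applies UGATT with the target $\alpha^{-1}(\eps)$ (equivalently, your choice of $\eps$ with $\alpha(\eps)\le\eta$), and for the converse the paper simply notes that the argument is analogous, which is exactly what you spell out via strict monotonicity of $\alpha$. Your remark about ruling out infinite tail sup-norms is a reasonable clarification that the paper leaves implicit.
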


\begin{proof}
If $\Sigmafp$ is UGATT, then for any $\alpha\in\Kinf$ and any $r,\eps >0$ there exists $\tau=\tau(r,\eps)$ so that
\begin{eqnarray*}
\|x\|_X \leq r,\ d\in\Dc,\ t \geq \tau(r,\eps) \quad \Rightarrow \quad \|\phi(t,x,d)\|_X \leq \alpha^{-1}(\eps).
\end{eqnarray*}
Equivalently, the left hand side implies $\alpha\big(\|\phi(t,x,d)\|_X\big) \leq \eps$
and taking the limit $\eps\to+0$ we arrive at \eqref{eq:UGATT_restatement}. The proof of the converse implication is analogous.
\end{proof}

\begin{remark}
\label{rem:with_PD_alpha_we_do_not_get_UGATT} 
Note that merely choosing a positive definite $\alpha$ in
\eqref{eq:UGATT_restatement} (i.e. $\alpha\in C(\R_+,\R_+)$: $\alpha(0)=0$ and $\alpha(r)>0$ for $r>0$) we do not arrive at any kind of attractivity, since 
the trajectory may grow to infinity, and $\alpha(\|\phi(t,x,d)\|_X)$ may converge to zero at the same time.
E.g. consider $\dot{x}(t)=x(t)$, $x(t)\in\R$, $\alpha(r):=\frac{r}{r^2+1}$. 
\end{remark}

Analogously, it is possible to restate the UGS property. 
In Theorem~\ref{thm:UGAS_characterization} we will show that UGAS implies iUGAS.

Finally, it is easy to see that 
\begin{lemma}
\label{lem:REP_implies_iREP} 
Let $\Sigma=(X,\Dc,\phi)$ be a system. If $\Sigmafp$ is a REP, then
$\Sigmafp$ is an iREP with arbitrary $\alpha\in\Kinf$.
\end{lemma}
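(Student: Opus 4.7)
The plan is to unwind the definitions and show that the REP estimate transfers to an integral estimate essentially for free, by using the monotonicity and continuity of $\alpha$ on the compact time interval $[0,h]$.

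First I would fix an arbitrary $\alpha \in \Kinf$ and show that the iREP condition holds for this choice. Given $\eps > 0$ and $h > 0$, the strategy is to produce a uniform pointwise bound on $\|\phi(t,x,d)\|_X$ for $t \in [0,h]$ that, once fed through $\alpha$ and integrated, yields at most $\eps$. Since $\alpha \in \Kinf$, the inverse $\alpha^{-1}$ is well-defined on $\R_+$, so I set $\tilde{\eps} := \alpha^{-1}(\eps/h)$, which guarantees $h \cdot \alpha(\tilde{\eps}) = \eps$.

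Next I would invoke the REP property at parameters $(\tilde{\eps}, h)$: this yields some $\delta = \delta(\tilde{\eps}, h) > 0$ such that for all $x$ with $\|x\|_X \leq \delta$, all $d \in \Dc$, and all $t \in [0,h]$, we have $\|\phi(t,x,d)\|_X \leq \tilde{\eps}$. Applying the monotone function $\alpha$ and integrating over $[0,h]$ then gives
\[
\int_0^h \alpha\big(\|\phi(s,x,d)\|_X\big)\, ds \;\leq\; \int_0^h \alpha(\tilde{\eps})\, ds \;=\; h\, \alpha(\tilde{\eps}) \;=\; \eps,
\]
which is exactly \eqref{eq:iREP}. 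The map $(\eps, h) \mapsto \delta(\tilde{\eps}(\eps,h), h)$ supplies the required dependence in the iREP definition.

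There is no serious obstacle here; the only small point to check is that $\alpha \in \Kinf$ is used to ensure $\alpha^{-1}(\eps/h)$ exists for every $\eps > 0$ (so that $\tilde{\eps}$ is well-defined), and that $\alpha$ is increasing so that the pointwise bound on the norm passes through $\alpha$ inside the integral. Since $\alpha$ was arbitrary in $\Kinf$, this also proves the ``arbitrary $\alpha$'' clause of the lemma.
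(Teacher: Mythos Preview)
Your proof is correct and follows essentially the same approach as the paper: invoke the REP property with the target $\tilde{\eps}=\alpha^{-1}(\eps/h)$, then use monotonicity of $\alpha$ to bound the integral over $[0,h]$ by $h\,\alpha(\tilde{\eps})=\eps$. The paper's argument is identical, only slightly more compressed in that it applies the REP directly with the bound $\alpha^{-1}(\eps/h)$ without introducing the intermediate name $\tilde{\eps}$.
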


\begin{proof}
Fix $\alpha\in\Kinf$. Since $\Sigmafp$ is a REP of
$\Sigma=(X,\Dc,\phi)$, for every $\eps >0$, $h>0$ there is $\delta =
\delta(\eps,h)>0$ such that 
\begin{equation*}
\hspace{-7mm} \|x\|_X \leq \delta, \ d \in \Dc \quad \Rightarrow \quad \sup_{t\in[0,h]} \|\phi(s,x,d)\|_X \leq \alpha^{-1}\big(\frac{\varepsilon}{h}\big).
\end{equation*}
Hence it holds that
\begin{equation*}
\hspace{-7mm} \|x\|_X \leq \delta, \ d \in \Dc \quad \Rightarrow \quad \int_0^h \alpha\big(\|\phi(s,x,d)\|_X\big) ds \leq \varepsilon,
\end{equation*}
which shows $\Sigmafp$ is an iREP with the above $\alpha$.
\end{proof}

We now introduce Lyapunov functions which will help in characterizing the
UGAS and iUGAS concepts. To this end we first recall the notion of Dini derivative.
For 
$h: \R \to \R$ 
the right-hand lower Dini derivative $D_+$ and the right-hand upper Dini
derivative $D^+$ at a point $t\in \R$ are
defined by, see \cite{Sza65},
\begin{equation}
    \label{eq:4}
    \begin{aligned}
       D_+h(t)&:=\mathop{\underline{\lim}} \limits_{\tau \rightarrow +0}
       {\frac{1}{\tau}\big(h(t+\tau)-h(t)\big) },
\\ 
		D^+h(t)&:=\mathop{\overline{\lim}} \limits_{\tau \rightarrow +0} {\frac{1}{\tau}\big(h(t+\tau)-h(t)\big) }.     
    \end{aligned}
\end{equation}

Consider a system $\Sigma= (X,\Dc,\phi)$ and let $V:X \to\R$ be a map.
Given $x\in X,d\in \mathcal{D}$, we consider the (right-hand lower) Dini
derivative of the function $t \mapsto V(\phi(t,x,d))$ at $t=0$ denoted by:
\begin{equation}
\label{UGAS_wrt_D_LyapAbleitung}
\dot{V}_d(x):=\mathop{\underline{\lim}} \limits_{t \rightarrow +0} {\frac{1}{t}\Big(V\big(\phi(t,x,d)\big)-V(x)\Big) }.
\end{equation}
We call this the Dini derivative of $V$ along the trajectories of
$\Sigma$. We stress that at this point no continuity assumption has been
placed on $V$.

Having introduced the main stability properties, we introduce now a predominant tool for their study, which is a Lyapunov function.
\begin{definition}
\label{def:UGAS_LF_With_Disturbances}
Consider a system $\Sigma= (X,\Dc,\phi)$ and a function $V:X \to \R_+$, satisfying for each $y\in X$, each $s> 0$ and each $d\in\Dc$ the inequalities
\begin{eqnarray}
\label{eq:V_estimate_along_trajectory_final}
\mathop{\underline{\lim}} \limits_{h \rightarrow +0} V\big(\phi(s-h,y,d)\big) \geq V\big(\phi(s,y,d)\big)
\geq
\mathop{\underline{\lim}} \limits_{h \rightarrow +0}
V\big(\phi(s+h,y,d)\big).
\nonumber
\end{eqnarray}
Assume also that the right inequality in \eqref{eq:V_estimate_along_trajectory_final} is satisfied for $s:=0$ as well.
The map $V$ is called:
\begin{enumerate}[(i)] 
	\item a \textit{non-coercive Lyapunov function} for the system
          $\Sigma=(X,\Dc,\phi)$,  if $V(0)=0$ and if there exist $\psi_2 \in \Kinf$ and $\alpha \in \K$ such that 
\begin{equation}
    \label{eq:1}
    0 < V(x) \leq \psi_2(\|x\|_X) \quad \forall x \in X \backslash\{0\}.
\end{equation}
holds and the Dini derivative of $V$  along the trajectories of $\Sigma$ satisfies 
\begin{equation}
\label{DissipationIneq_UGAS_With_Disturbances}
\dot{V}_d(x) \leq -\alpha(\|x\|_X)
\end{equation}
for all $x \in X$ and all $d \in \Dc$. 

	 \item a \textit{(coercive) Lyapunov function} if in addition
           there is $\psi_1\in\Kinf$ satisfying \\
					$\psi_1(\|x\|_X) \leq V(x)$ for all $x\in X$. 
\end{enumerate}
 
\end{definition}
The inequalities \eqref{eq:V_estimate_along_trajectory_final}
 say that if a Lyapunov function is not continuous along a trajectory at some point, then its value jumps down at this point.



The following result is known:
\begin{proposition}
\label{Direct_LT_0-UGAS_maxType}
Let $\Sigma=(X,\Dc,\phi)$ be a system. Then:
\begin{itemize}
	\item[(i)] If there exists a coercive continuous Lyapunov function for $\Sigma$, then $\Sigmafp$ is UGAS.
	\item[(ii)] If there exists a non-coercive continuous Lyapunov function for $\Sigma$, and if $\Sigma$ is RFC and $\Sigmafp$ is a robust equilibrium, then $\Sigmafp$ is UGAS.
\end{itemize}
\end{proposition}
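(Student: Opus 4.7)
\emph{Part (i): coercive case.} Here the classical comparison argument applies. Using the upper coercive bound $V(x)\le\psi_2(\|x\|_X)$, I would rewrite the dissipation inequality as
\[
\dot V_d(x)\le -\alpha\bigl(\psi_2^{-1}(V(x))\bigr)=:-\tilde\alpha(V(x)),\quad \tilde\alpha\in\K.
\]
Continuity of $V$ and of $t\mapsto\phi(t,x,d)$ makes $t\mapsto V(\phi(t,x,d))$ continuous and non-increasing, so a standard Dini-type scalar comparison with the one-dimensional initial value problem $\dot y=-\tilde\alpha(y)$, $y(0)=V(x)$, yields $\eta\in\KL$ satisfying $V(\phi(t,x,d))\le\eta(V(x),t)$ uniformly in $d\in\Dc$. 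Sandwiching with $\psi_1(\|\cdot\|_X)\le V\le\psi_2(\|\cdot\|_X)$ produces the $\KL$-estimate $\|\phi(t,x,d)\|_X\le\psi_1^{-1}(\eta(\psi_2(\|x\|_X),t))$, which is UGAS.

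\emph{Part (ii): non-coercive case.} I would split UGAS into uniform local stability and uniform global attractivity and exploit two consequences of the hypotheses. First, the one-sided continuity axioms in the Lyapunov definition combined with the dissipation inequality make $t\mapsto V(\phi(t,x,d))$ non-increasing along every trajectory. Second, integrating the dissipation over $[0,T]$ gives the uniform estimate
\[
\int_0^T \alpha(\|\phi(s,x,d)\|_X)\,ds \le V(x)\le \psi_2(\|x\|_X),\qquad T\ge 0,\ d\in\Dc.
\]
For UGATT, fix $r,\eps>0$, use RFC together with the cocycle property to bound trajectories uniformly on unit-length windows, then use the integral bound to force the trajectory to dip below the REP-threshold $\delta(\eps,1)$ after a uniformly bounded time; REP together with the monotonicity of $V$ then confines the trajectory to $B_\eps$ from that moment onwards.

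\emph{ULS and main obstacle.} For ULS, given $\eps>0$, I would argue by contradiction with sequences $x_n\to 0$, $d_n\in\Dc$, $t_n\ge 0$ satisfying $\|\phi(t_n,x_n,d_n)\|_X>\eps$. REP (applied with arbitrarily large horizon $h$) forces $t_n\to\infty$, the upper bound $V(x_n)\le\psi_2(\|x_n\|_X)\to 0$ together with monotonicity gives $V(\phi(t_n,x_n,d_n))\to 0$, and the residual integral $\int_{t_n}^\infty\alpha(\|\phi(s,x_n,d_n)\|_X)\,ds$ vanishes. Carefully combining RFC (to preclude finite-time escape), monotonicity of $V$ (to keep $V$ small for all subsequent times), and REP (to translate smallness at a single instant into smallness on the following window via the cocycle) should contradict $\|\phi(t_n,x_n,d_n)\|_X>\eps$. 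The main obstacle is exactly this step: the absence of a $\Kinf$ lower bound on $V$ means that $V(y_n)\to 0$ does not imply $\|y_n\|_X\to 0$, so the usual coercive shortcut fails, and one must delicately interleave all three structural hypotheses (RFC, REP, and the integral estimate derived from the non-coercive $V$) to preclude trajectory escape.
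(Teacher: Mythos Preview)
The paper does not prove this proposition in place; item (i) is cited as classical and item (ii) is attributed to \cite{MiW17a}, though the paper essentially re-derives (ii) as Theorem~\ref{thm:direct_Non-coercive_Lyapunov_theorem}(iii) via the route non-coercive LF $\Rightarrow$ iUGS $\Rightarrow$ iUGATT, and then iUGATT $\wedge$ REP $\wedge$ RFC $\Rightarrow$ UGAS (Theorem~\ref{thm:UGAS_characterization}). Your Part (i) is correct and classical, and in Part (ii) the integral bound and the overall decomposition are on the right track.

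There is, however, a genuine gap at the confinement step. You claim that once the trajectory dips below $\delta(\eps,1)$, ``REP together with the monotonicity of $V$ then confines the trajectory to $B_\eps$ from that moment onwards.'' But monotonicity of $V$ gives no pointwise control on $\|\phi\|_X$ precisely because $V$ is non-coercive, and REP alone only buys one unit of time; nothing in your sketch prevents escape thereafter. The same difficulty resurfaces in your ULS paragraph: you correctly flag the obstacle, but ``delicately interleave RFC, REP, and the integral estimate'' is not yet an argument (and RFC is in fact not used for ULS). The device your sketch is missing is the last-exit-time dichotomy of Proposition~\ref{prop:Influence_of_REP_and_RFC}(i): with $\delta=\delta(\eps,1)$ from REP, set $\tilde t_k:=\sup\{t\in[0,t_k]:\|\phi(t,x_k,d_k)\|_X\le\delta\}$. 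Either $t_k-\tilde t_k\ge\eta^*>0$ along a subsequence, so that $\int_{\tilde t_k}^{t_k}\alpha(\|\phi\|_X)\,ds\ge\eta^*\alpha(\delta)$ contradicts the integral bound $\le\psi_2(\|x_k\|_X)\to 0$; or $t_k-\tilde t_k\to 0$, so the cocycle property plus REP on horizon $1$ force $\|\phi(t_k,x_k,d_k)\|_X\le\eps/2$. This yields ULS from REP and the integral bound alone. With ULS in hand your UGATT step closes (replace ``monotonicity of $V$'' by ULS: once the trajectory dips into $B_{\delta_{\mathrm{ULS}}(\eps)}$ it stays in $B_\eps$), and RFC enters only at the very last step REP $\wedge$ UGATT $\wedge$ RFC $\Rightarrow$ UGAS (Proposition~\ref{prop:UGAS_Characterization}).
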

Proposition~\ref{Direct_LT_0-UGAS_maxType}\,(i) is a classic result, and item (ii) has been shown in
\cite{MiW17a}, where the concept of a non-coercive Lyapunov function for nonlinear systems has been introduced and analyzed.
\textit{There is an apparent distinction in the results (i) and (ii)}, in that
in item (ii) the existence of a non-coercive Lyapunov function implies
UGAS, provided that REP and RFC hold. In case that either REP or RFC do not hold, non-coercive Lyapunov functions do not imply UGAS, as demonstrated by examples in \cite{MiW17a}.

This difference in the formulations of items (i) and (ii) of Proposition~\ref{Direct_LT_0-UGAS_maxType} motivates the first question:
\begin{center}
\textit{What are the stability properties, which can be inferred from the existence of a non-coercive Lyapunov function, without requiring any further assumptions on $\Sigma$?}
\end{center}
On the other hand, it is well-known, that UGAS implies existence of a coercive Lyapunov function, at least under certain regularity assumptions on the flow of $\Sigma$. This leads to the second problem which we analyze in this paper:
\begin{center}
\textit{What property, which is weaker than UGAS, implies existence of a non-coercive Lyapunov function (and at the same time does not imply the existence of a coercive Lyapunov function)?}
\end{center}
In Section~\ref{sec:Non-coercive_LFs} we resolve both these questions by
showing that \emph{existence of a non-coercive Lyapunov function is equivalent
to the iUGAS property}. Moreover, in
Section~\ref{sec:iUGAS_criterion_and_Noncoercivity} we show several useful
criteria for iUGAS and iUGATT, we give ``atomic decompositions'' of the UGAS
property in Section~\ref{sec:UGAS_characterizations}. Furthermore, in
Section~\ref{sec:Non-coercive_LFs} we analyze which stability properties
can be ensured by the existence of a non-coercive Lyapunov function, if
it is only assumed that either $\Sigmafp$ is a REP or that the RFC property of $\Sigma$ holds.

In Figure~\ref{fig:Stability} a reader can find a graphical overview of the results obtained in this paper, in particular, the relationship between the introduced stability notions.

%

\section{Criteria for iUGATT and iUGAS}
\label{sec:iUGAS_criterion_and_Noncoercivity}

In this section we study ``integral'' stability properties starting with
criteria for integral UGATT and then for iUGAS.

\subsection{Criteria for integral UGATT}

First we would like to give a criterion for iUGATT in terms of UGWA.
To this end we need one more concept:
\begin{definition}
\label{def:integral_Ultimate_uniform_stability} 
Let $\Sigma$ be a forward complete system. We say that the fixed point $\Sigmafp$ is ultimately (locally) integrally stable (ultimately iULS) if there is $\alpha\in\K$
so that for any $\eps>0$ there exist $T=T(\eps)>0$ and $\delta = \delta (\eps)>0$ so that 
\begin{eqnarray}
\|x\|_X \leq \delta,\ d \in \Dc \quad \Rightarrow \quad \int_{T(\eps)}^\infty \alpha\big(\|\phi(t,x,d)\|_X\big)ds \leq \eps.
\label{eq:integral_UltimateLyapunovStability_epsdelta}
\end{eqnarray}
\end{definition}

Now we are in a position to characterize iUGATT.
\begin{proposition}
\label{prop:iUGATT_equiv_UGWA_Ult_iULS}
Consider a forward complete system $\Sigma=(X,\Dc,\phi)$ with fixed point $\Sigmafp$.
Then $\Sigmafp$ is iUGATT with some $\alpha\in\K$ if and only if $\Sigmafp$ is UGWA and ultimately iULS with the same $\alpha$.
\end{proposition}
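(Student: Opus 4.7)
The plan is to prove both implications by combining a slicing (mean value) argument on the tail integral with the cocycle property. In the forward direction the slicing will turn a small tail integral into the existence of instants with small state, giving UGWA, whereas ultimately iULS will be read off iUGATT by fixing a single ball. In the reverse direction UGWA will be used to drive any trajectory from $\overline{B_r}$ into the ``small neighborhood of attraction'' provided by ultimately iULS, after which the cocycle shifts the ultimate estimate back to the original trajectory.

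For the implication ``iUGATT $\Rightarrow$ UGWA and ultimately iULS'', fix $\eps,r>0$ and choose, by iUGATT, some $\tau=\tau(r,\eps)\geq 1$ with $\sup_{x\in\overline{B_r},\,d\in\Dc}\int_{\tau-1}^{\infty}\alpha(\|\phi(s,x,d)\|_X)\,ds\leq \tfrac{1}{2}\alpha(\eps)$. Since $s\mapsto \alpha(\|\phi(s,x,d)\|_X)$ is continuous on $[\tau-1,\tau]$ by axiom \ref{axiom:Continuity}, it cannot be strictly larger than $\tfrac{1}{2}\alpha(\eps)$ on the whole interval of length $1$, so some $t=t(x,d,\eps)\in[\tau-1,\tau]$ yields $\|\phi(t,x,d)\|_X\leq \alpha^{-1}(\tfrac{1}{2}\alpha(\eps))\leq\eps$; this is exactly UGWA. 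Ultimately iULS with the same $\alpha$ follows by applying iUGATT with $r:=1$: for each $\eps>0$ pick $T(\eps)$ so that $\sup_{x\in\overline{B_1},d\in\Dc}\int_{T(\eps)}^{\infty}\alpha(\|\phi(s,x,d)\|_X)\,ds\leq\eps$, and set $\delta(\eps):=1$ (trivially independent of $\eps$).

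For the implication ``UGWA and ultimately iULS (with $\alpha$) $\Rightarrow$ iUGATT (with $\alpha$)'', fix $r,\eps>0$. Let $T=T(\eps)$ and $\delta=\delta(\eps)$ be from ultimately iULS, and let $\tau_0=\tau(r,\delta)$ be from UGWA. By UGWA, for every $x\in\overline{B_r}$ and $d\in\Dc$ there is $t_0=t_0(x,d)\in[0,\tau_0]$ with $\|\phi(t_0,x,d)\|_X\leq\delta$. Setting $y:=\phi(t_0,x,d)$ and $\tilde d:=d(t_0+\cdot)\in\Dc$ (shift-invariance), the cocycle property in axiom \ref{axiom:Cocycle} gives $\phi(s,y,\tilde d)=\phi(s+t_0,x,d)$, hence by the substitution $u=s+t_0$ and ultimately iULS
\[
\int_{T+t_0}^{\infty}\alpha(\|\phi(u,x,d)\|_X)\,du=\int_{T}^{\infty}\alpha(\|\phi(s,y,\tilde d)\|_X)\,ds\leq\eps.
\]
Since $t_0\leq\tau_0$, for every $t\geq \tau_0+T$ the tail integral $\int_t^{\infty}\alpha(\|\phi(s,x,d)\|_X)\,ds$ is bounded above by $\eps$ uniformly in $x\in\overline{B_r}$ and $d\in\Dc$, which is iUGATT with the same $\alpha$. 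The only delicate point in the whole argument is the bookkeeping of the comparison function $\alpha$ across the cocycle shift and the requirement that the same $\alpha$ appears in both iUGATT and ultimately iULS; the construction above is set up precisely to preserve it.
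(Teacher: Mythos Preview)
Your proof is correct and follows essentially the same approach as the paper's: the forward direction uses a mean-value/slicing argument on a unit-length interval to extract UGWA from the small tail integral, and the reverse direction drives the trajectory into the $\delta$-ball via UGWA and then applies the cocycle property together with ultimately iULS. The only cosmetic differences are that the paper argues by contradiction on the interval $[\tilde\tau,\tilde\tau+1]$ rather than $[\tau-1,\tau]$, and it does not spell out the choice $\delta(\eps)=1$ for ultimately iULS (it just says this ``easily follows'').
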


\begin{proof}
$\Rightarrow$. Assume $\Sigmafp$ is iUGATT for a given $\alpha\in\K$. Ultimate iULS of $\Sigmafp$ (with the same weight function $\alpha$) easily follows from iUGATT. Let us show that $\Sigmafp$ is UGWA.

Pick any $R>0$ and any $\eps>0$. Since $\Sigmafp$ is integrally UGATT, there is a time $\tilde\tau=\tilde\tau(R,\eps)$ so that 
\begin{eqnarray*}
\sup_{x\in \overline{B_r},\;d\in\Dc} \int_{\tilde{\tau}(R,\eps)}^\infty \alpha(\|\phi(s,x,d)\|_X)ds  \leq \frac{1}{2}\alpha(\eps).
\end{eqnarray*}

Assume that for some $x\in \overline{B_r}$, some $d\in\Dc$ and all $s\in[\tilde{\tau}(R,\eps),\tilde{\tau}(R,\eps)+1]$ it holds that $\|\phi(s,x,d)\|_X\geq \eps$. Then 
\begin{eqnarray*}
\frac{1}{2}\alpha(\eps) \geq \int_{\tilde{\tau}(R,\eps)}^\infty
\alpha(\|\phi(s,x,d)\|_X)ds 
\geq \alpha(\eps),
\end{eqnarray*}
a contradiction.
This shows that $\Sigmafp$ is uniformly globally weakly attractive with $\tau(R,\eps):=\tilde{\tau}(R,\eps) + 1$.
 
$\Leftarrow$. Since $\Sigmafp$ is ultimately iULS, there exists $\alpha\in\K$ so that for all $\eps>0$ there are $\delta(\eps)>0$ and $T(\eps)>0$ so that \eqref{eq:integral_UltimateLyapunovStability_epsdelta} holds.

Pick any $\varepsilon>0$ and $r>0$. Since $\Sigmafp$ is uniformly globally weakly attractive, there is a time $\tilde\tau=\tilde\tau(r,\eps)$ so that for any $x\in B_r$ and any $d\in \Dc$ there is a time $\bar{t}\in [0,\tilde\tau(r,\eps))$ so that $\|\phi(\bar{t},x,d)\|_X\leq \delta(\eps)$.

In view of the ultimate iULS property we have that 
\[
t\geq T(\eps) \srs \int_t^\infty \alpha\big(\|\phi(s,\phi(\bar{t},x,d),d(\bar{t}+\cdot))\|_X\big)ds \leq \eps.
\]    
Due to the cocycle property
it holds that
\begin{multline*}
 \int_t^\infty \alpha\big(\|\phi(s,\phi(\bar{t},x,d),d(\bar{t}+\cdot))\|_X\big)ds \\ 
=
\int_t^\infty \alpha\big(\|\phi(s+\bar{t},x,d)\|_X\big)ds  
=
\int_{t+\bar{t}}^\infty \alpha\big(\|\phi(s,x,d)\|_X\big)ds.
\end{multline*}
%
Considering $t\geq \bar{t} + T(\eps)$, it is now easy to see that $\Sigmafp$ is iUGATT (with the same $\alpha$).
\end{proof}

Analogously to Proposition~\ref{prop:iUGATT_equiv_UGWA_Ult_iULS} one can
characterize the UGATT property. We define
\begin{definition}
\label{def:Ultimate_uniform_stability} 
Consider a forward complete system $\Sigma=(X,\Dc,\phi)$ with fixed point $\Sigmafp$.
The fixed point $\Sigmafp$ is called ultimately uniformly stable if for any $\eps>0$ there exist $T=T(\eps)>0$ and $\delta = \delta (\eps)>0$ so that 
\begin{eqnarray}
t \geq T,\ \|x\| \leq \delta,\ d \in \Dc \quad \Rightarrow \quad \|\phi(t,x,d)\| \leq \eps.
\label{eq:UltimateLyapunovStability_epsdelta}
\end{eqnarray}
\end{definition}

\begin{proposition}
\label{prop:UGATT_equiv_UGWA_Ult_ULS}
Consider a forward complete system $\Sigma=(X,\Dc,\phi)$ with fixed point $\Sigmafp$.
Then $\Sigmafp$ is UGATT if and only if $\Sigmafp$ is ultimately uniformly stable and UGWA.
\end{proposition}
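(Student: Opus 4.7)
The plan is to mimic the proof of Proposition~\ref{prop:iUGATT_equiv_UGWA_Ult_iULS} almost verbatim, replacing integrals of $\alpha \circ \|\phi\|_X$ by pointwise norms $\|\phi\|_X$. In fact the argument becomes somewhat simpler, since we no longer need to convert a uniform pointwise bound into an integral bound or vice versa.

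For the forward implication, both UGWA and ultimate uniform stability read off immediately from the UGATT estimate. UGWA is trivial: given $r,\eps>0$, the single time $t := \tau(r,\eps)$ supplied by UGATT already lies in $[0,\tau(r,\eps)]$ and satisfies $\|\phi(t,x,d)\|_X\le\eps$ for every $x\in\overline{B_r}$, $d\in\Dc$. For ultimate uniform stability one fixes any $\delta(\eps)>0$ (e.g. $\delta(\eps):=1$) and sets $T(\eps):=\tau(1,\eps)$; the UGATT inequality then directly gives the conclusion $\|\phi(t,x,d)\|_X\le\eps$ for $\|x\|_X\le\delta(\eps)$ and $t\ge T(\eps)$.

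For the converse, fix arbitrary $r,\eps>0$ and produce the UGATT time $\tau(r,\eps)$ in two steps. First, invoke ultimate uniform stability to obtain $T(\eps), \delta(\eps)>0$ such that $\|y\|_X\le\delta(\eps)$ and $s\ge T(\eps)$ force $\|\phi(s,y,d)\|_X\le\eps$ for all $d\in\Dc$. Next, apply UGWA with the threshold $\delta(\eps)$ to obtain $\tilde\tau(r,\delta(\eps))>0$ such that every trajectory starting in $\overline{B_r}$ visits $\overline{B_{\delta(\eps)}}$ at some instant $\bar t = \bar t(x,d)\in[0,\tilde\tau(r,\delta(\eps))]$. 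For $t\ge\tilde\tau(r,\delta(\eps))+T(\eps)$ we then have $t-\bar t\ge T(\eps)$, and the cocycle property together with shift invariance of $\Dc$ gives
\[
\phi(t,x,d) = \phi\bigl(t-\bar t,\ \phi(\bar t,x,d),\ d(\bar t + \cdot)\bigr),
\]
so ultimate uniform stability, applied to the initial state $\phi(\bar t,x,d)\in\overline{B_{\delta(\eps)}}$ and the shifted disturbance $d(\bar t+\cdot)\in\Dc$, yields $\|\phi(t,x,d)\|_X\le\eps$. Setting $\tau(r,\eps):=\tilde\tau(r,\delta(\eps))+T(\eps)$ concludes the argument.

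The only subtlety is bookkeeping around the first-hitting time: since $\bar t$ depends on $(x,d)$, the application of ultimate uniform stability to the ``tail'' trajectory requires that $d(\bar t+\cdot)$ still belong to $\Dc$, which is exactly the axiom of shift invariance. No other obstacle arises, as the proof is otherwise a direct pointwise transcription of the iUGATT version.
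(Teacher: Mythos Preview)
Your proof is correct and follows the same approach as the paper: the forward direction is immediate (the paper simply says ``Clear''), and for the converse both you and the paper first extract $T(\eps),\delta(\eps)$ from ultimate uniform stability, then use UGWA to bring the trajectory into $\overline{B_{\delta(\eps)}}$ by some time $\bar t\le\tilde\tau$, and finally apply the cocycle property to conclude with $\tau(r,\eps):=\tilde\tau+T(\eps)$. Your explicit remark that shift invariance of $\Dc$ is needed for the tail disturbance is a nice clarification that the paper leaves implicit.
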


\begin{proof}
"$\Rightarrow$". Clear.

"$\Leftarrow$". Fix $\eps>0$. Since $\A$ is ultimately uniformly stable,
there are positive $\delta(\eps)$ and $T(\eps)$ so that 
\eqref{eq:UltimateLyapunovStability_epsdelta} holds.
Now pick any $r>0$.
By uniform global weak attractivity of
$\Sigmafp$ 
there is a time $\tilde\tau=\tilde\tau(r,\eps)$ so that for any $x\in B_r(\A)$ and any $d\in \Dc$ there is a time $\bar{t}\in [0, \tilde\tau(r,\eps))$ so that $\|\phi(\bar{t},x,d)\|\leq \delta(\eps)$.

Due to the cocycle property
\begin{eqnarray*}
\phi(t+\bar{t},x,d)= \phi(t,\phi(\bar{t},x,d),d(\bar{t} +\cdot)),
\end{eqnarray*}
and in view of ultimate uniform stability we have that 
\[
t\geq \bar{t}+T(\eps),\ x\in B_r(\A),\ d\in\Dc \srs \|\phi(t,x,d)\|\leq \eps,
\]    
Specializing this to $t\geq \tilde\tau(r,\eps)+T(\eps)$, we see that $\A$ is UGATT.
\end{proof}

\subsection{Characterization of iUGAS}

In \cite[Proposition 3.7]{MiW17a} the following result has been obtained (the statement in \cite{MiW17a} was somewhat different, but the proof is exactly the same):
\begin{proposition}
\label{prop:From_ncLF_to_UGWA}
Consider a forward complete system $\Sigma=(X,\Dc,\phi)$  with fixed point $\Sigmafp$. If $\Sigmafp$ is iUGS, then $\Sigmafp$ is UGWA.
\end{proposition}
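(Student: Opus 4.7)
The plan is to argue by contradiction, exploiting that an $\alpha$-integral bound on the trajectory forces the norm to dip below any prescribed level on some sufficiently long interval.

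More precisely, let $\alpha\in\K$ and $\psi\in\Kinf$ be the functions from the iUGS estimate \eqref{eq:Integral_boundedness} (with $r:=\infty$). Fix arbitrary $\varepsilon>0$ and $R>0$, and set
\[
\tau(\varepsilon,R) := \frac{\psi(R)}{\alpha(\varepsilon)} + 1.
\]
I would then show that this $\tau$ witnesses the UGWA property for the chosen $\varepsilon,R$: for every $x$ with $\|x\|_X\leq R$ and every $d\in\Dc$ there exists $t\in[0,\tau(\varepsilon,R)]$ with $\|\phi(t,x,d)\|_X\leq\varepsilon$.

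Suppose, for contradiction, that this fails for some such $x,d$, so that $\|\phi(t,x,d)\|_X>\varepsilon$ for all $t\in[0,\tau(\varepsilon,R)]$. Since $\alpha$ is strictly increasing, this yields the pointwise lower bound $\alpha(\|\phi(t,x,d)\|_X)\geq\alpha(\varepsilon)$ on that interval. By axiom \ref{axiom:Continuity}, $t\mapsto\|\phi(t,x,d)\|_X$ is continuous, so $t\mapsto\alpha(\|\phi(t,x,d)\|_X)$ is measurable and the integral in \eqref{eq:Integral_boundedness} is well-defined. Integrating then gives
\[
\int_0^\infty\alpha\bigl(\|\phi(s,x,d)\|_X\bigr)\,ds
\;\geq\;\int_0^{\tau(\varepsilon,R)}\alpha\bigl(\|\phi(s,x,d)\|_X\bigr)\,ds
\;\geq\;\tau(\varepsilon,R)\,\alpha(\varepsilon)
\;>\;\psi(R)\;\geq\;\psi(\|x\|_X),
\]
contradicting iUGS. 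Hence UGWA holds with the stated $\tau$.

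There is essentially no obstacle: the argument is a one-line pigeonhole in integral form. The only subtlety to flag is the (trivial) observation that continuity of the trajectory together with monotonicity of $\alpha$ turns the pointwise strict inequality $\|\phi(t,x,d)\|_X>\varepsilon$ into the integrable lower bound $\alpha(\|\phi(t,x,d)\|_X)\geq\alpha(\varepsilon)$, so the contradiction really uses only the iUGS estimate and axiom~\ref{axiom:Continuity}.
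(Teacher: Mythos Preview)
Your argument is correct and is precisely the standard pigeonhole-in-integral-form proof one would expect here. The paper itself does not prove this proposition but defers to \cite[Proposition 3.7]{MiW17a}; your contradiction argument with $\tau(\varepsilon,R)=\psi(R)/\alpha(\varepsilon)+1$ is exactly the natural route and almost certainly coincides with the cited proof.
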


We also note:
\begin{lemma}
\label{lem:iULS_criterion} 
Consider a forward complete system $\Sigma=(X,\Dc,\phi)$ with fixed point $\Sigmafp$.
Then $\Sigmafp$ is iULS if and only if $\Sigmafp$ is an iREP and ultimately iULS.
\end{lemma}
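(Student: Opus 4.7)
The plan is to do the two directions separately. The forward direction will be essentially immediate from the definition of iULS, while the backward direction will require merging the two asymptotic estimates and then constructing a $\Kinf$ majorant.

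For the forward direction, suppose $\Sigmafp$ is iULS with some $\alpha\in\K$, $\psi\in\Kinf$ and $r>0$, so that $\int_0^\infty \alpha(\|\phi(s,x,d)\|_X)ds\leq \psi(\|x\|_X)$ on $\overline{B_r}$. For any $\eps>0$ and any $h>0$, set $\delta(\eps,h):=\min\{r,\psi^{-1}(\eps)\}$; then $\|x\|_X\leq \delta$ implies $\int_0^h \alpha(\|\phi(s,x,d)\|_X)ds\leq \int_0^\infty \alpha(\|\phi(s,x,d)\|_X)ds\leq\psi(\|x\|_X)\leq \eps$, so $\Sigmafp$ is an iREP (with the same $\alpha$). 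The identical bound, applied to any fixed $T>0$, yields ultimately iULS.

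For the backward direction, suppose $\Sigmafp$ is an iREP with weight $\alpha_1\in\K$ and is ultimately iULS with weight $\alpha_2\in\K$. Put $\alpha(s):=\min\{\alpha_1(s),\alpha_2(s)\}$; this lies in $\K$, and since decreasing the weight only decreases the integrals, \emph{both} properties continue to hold with this common $\alpha$. Given any $\eps>0$, apply ultimately iULS with $\eps/2$ to obtain $T(\eps/2)>0$ and $\delta_1(\eps)>0$, and then apply iREP with $\eps/2$ and $h:=T(\eps/2)$ to obtain $\delta_2(\eps)>0$. Set $\tilde\delta(\eps):=\min\{\delta_1(\eps),\delta_2(\eps)\}$. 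Splitting the integral at $T(\eps/2)$ yields, for $\|x\|_X\leq\tilde\delta(\eps)$ and all $d\in\Dc$,
\begin{equation*}
\int_0^\infty \alpha(\|\phi(s,x,d)\|_X)ds = \int_0^{T(\eps/2)}\!\! + \int_{T(\eps/2)}^\infty \leq \tfrac{\eps}{2}+\tfrac{\eps}{2}=\eps.
\end{equation*}

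It remains to upgrade this $\eps$--$\delta$ estimate to a $\Kinf$ bound in $\|x\|_X$. Choose $r:=\tilde\delta(1)$ and define the monotone envelope $\omega(s):=\sup_{\|x\|_X\leq s,\,d\in\Dc}\int_0^\infty \alpha(\|\phi(t,x,d)\|_X)dt$ for $s\in[0,r]$. Then $\omega$ is nondecreasing, $\omega(r)\leq 1$, and the estimate above gives $\omega(\tilde\delta(\eps))\leq \eps$, so $\omega(s)\to 0$ as $s\to 0^+$. A standard construction in comparison function theory (majorizing a nondecreasing function that is right-continuous at the origin by a $\Kinf$ function, as used repeatedly in \cite{Kel14,MiW17a}) furnishes $\psi\in\Kinf$ with $\omega(s)\leq\psi(s)$ on $[0,r]$. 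This delivers the iULS estimate and closes the proof.

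I expect the forward direction and the splitting argument to be essentially routine; the one step that requires a little care is the passage from the family of $(\eps,\delta)$-bounds to a single $\Kinf$ majorant $\psi$, and arranging that iREP and ultimately iULS share a common weight $\alpha$. The latter is handled cleanly by the $\min$ trick, and the former is a well-known tool, so there is no genuine obstacle beyond being mindful of these bookkeeping points.
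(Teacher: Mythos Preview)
Your proof is correct and follows essentially the same route as the paper: the forward direction is immediate, and the backward direction uses the $\min$ trick to obtain a common weight $\alpha$, splits the integral at $T(\eps/2)$, and then upgrades the resulting $\eps$--$\delta$ estimate to a $\Kinf$ bound. The only cosmetic difference is in this last step: the paper lower-bounds the map $\eps\mapsto\tilde\delta(\eps)$ by a $\K$ function $\delta$ and sets $\psi:=\delta^{-1}$, whereas you define the envelope $\omega$ directly and majorize it by a $\Kinf$ function; both are standard and equivalent.
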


\begin{proof}
$\Rightarrow$. This is clear.

$\Leftarrow$. Since $\Sigmafp$ is ultimately iULS, there is $\alpha_1\in\K$ so that for any $\varepsilon>0$ there are 
$r=r(\eps)>0$ and a time $\tau=\tau(\varepsilon)>0$ satisfying 
\begin{eqnarray*}
\|x\|_X\leq r(\eps),\ d \in\Dc \srs \int_{\tau}^\infty \alpha_1\big(\|\phi(s,x,d)\|_X\big)ds \leq \frac{\varepsilon}{2}. 
\end{eqnarray*}
Now since $\Sigmafp$ is an iREP,  there is $\alpha_2\in\K$ so that  for these $\varepsilon,\tau$ there is a $0<\tilde\delta=\tilde{\delta}(\varepsilon)\leq r(\eps)$ so that
\begin{eqnarray*}
\|x\|_X\leq \tilde\delta, \ d \in\Dc \srs \int_0^{\tau} \alpha_2\big(\|\phi(s,x,d)\|_X\big)ds \leq \frac{\varepsilon}{2}. 
\end{eqnarray*}
Define $\alpha(s):=\min\{\alpha_1(s),\alpha_2(s)\}$, $s\geq0$. Clearly, $\alpha\in\K$ and it holds that
\begin{eqnarray*}
\|x\|_X\leq \tilde\delta, \ d \in\Dc \srs \int_0^{+\infty} \alpha\big(\|\phi(s,x,d)\|_X\big)ds \leq \varepsilon.
\end{eqnarray*}
Without loss of generality we can assume that $\tilde{\delta}$ is
non-decreasing as a function of $\eps$. Furthermore, by construction it
holds that $\tilde{\delta}$ can be continuously extended by $\tilde{\delta}(0)=0$. Then it can be lowerbounded by a certain $\delta\in\K$.

Now iULS of $\Sigmafp$ follows by choosing $\psi(s):=\delta^{-1}(s)$,
$s\in[0, \lim_{s\to\infty} \delta(s))$.
\end{proof}


The main result in this section is the characterization of the notion of iUGAS:
\begin{theorem}
\label{thm:iUGAS_characterization} 
Consider a forward complete system $\Sigma=(X,\Dc,\phi)$. Then the following statements are equivalent:
\begin{enumerate}[(i)]
	\item $\Sigmafp$ is iUGAS.
	\item $\Sigmafp$ is iUGS.
	\item \label{item3} $\Sigmafp$ is iULS (with a certain $\alpha\in\K$) and $\Sigmafp$ is UGWA.
	\item \label{item4} $\Sigmafp$ is an iREP and $\Sigmafp$ is iUGATT.
	\item $\Sigmafp$ is iULS and $\Sigmafp$ is iUGATT.
	\item $\Sigmafp$ is iUGS and $\Sigmafp$ is iUGATT.
\end{enumerate}
Moreover, in item~(\ref{item4}) the function $\alpha$ can be chosen to be equal
to $\alpha$ from item~(\ref{item3}).
\end{theorem}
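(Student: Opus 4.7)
The plan is to prove the six-way equivalence by the cyclic chain $(i)\Rightarrow(ii)\Rightarrow(iii)\Rightarrow(iv)\Rightarrow(v)\Rightarrow(vi)\Rightarrow(i)$, tracking the weight function $\alpha$ through the step $(iii)\Rightarrow(iv)$ in order to secure the "moreover" assertion.

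Several implications are routine once one invokes the results already at hand. $(i)\Rightarrow(ii)$: set $t=0$ in the iUGAS estimate and upper-bound $\beta(\cdot,0)\in\K$ by a $\Kinf$-majorant. $(ii)\Rightarrow(iii)$: iUGS is iULS with $r=\infty$, and Proposition~\ref{prop:From_ncLF_to_UGWA} yields UGWA. $(iii)\Rightarrow(iv)$: iULS (with weight $\alpha$) yields iREP with the same $\alpha$ by choosing $\delta(\eps)$ with $\psi_2(\delta(\eps))\leq\eps$, and trivially yields ultimately iULS with the same $\alpha$ (take $T(\eps)=0$); Proposition~\ref{prop:iUGATT_equiv_UGWA_Ult_iULS} then combines UGWA and ultimately iULS into iUGATT with the same $\alpha$, establishing the "moreover" claim. $(iv)\Rightarrow(v)$: the other direction of Proposition~\ref{prop:iUGATT_equiv_UGWA_Ult_iULS} extracts UGWA and ultimately iULS from iUGATT, and Lemma~\ref{lem:iULS_criterion} merges iREP with ultimately iULS into iULS. $(vi)\Rightarrow(i)$: after passing to a common weight via $\min$, define $g(r,t):=\sup_{\|x\|_X\leq r,d\in\Dc}\int_t^\infty\alpha(\|\phi(s,x,d)\|_X)\,ds$; it is non-decreasing in $r$, non-increasing in $t$, satisfies $g(r,0)\leq\psi(r)$ with $\psi\in\Kinf$ (from iUGS, so $g(r,0)\to 0$ as $r\to 0^+$), and $g(r,t)\to 0$ as $t\to\infty$ for each fixed $r$ (from iUGATT); a standard $\KL$-majorization lemma then produces $\beta\in\KL$ with $g(r,t)\leq\beta(r,t)$, which is iUGAS.

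The main obstacle is $(v)\Rightarrow(vi)$, since iULS controls integrals only from states of norm at most some $r_0$ and iUGATT gives no a priori bound on $\|\phi(s,x,d)\|_X$ over any finite interval $[0,\tau(r,r_0)]$ (the system is only forward complete, not RFC). My remedy is to pass to a bounded weight. Writing $\alpha_1$ for the weight from iUGATT and $\alpha_2$, $\psi_2$, $r_0$ for the data from iULS, set $\tilde{\alpha}(s):=\min(\alpha_1(s),\alpha_2(s))/(1+\min(\alpha_1(s),\alpha_2(s)))$; this lies in $\K$ (the minimum of two $\K$-functions is itself strictly increasing, and $x\mapsto x/(1+x)$ preserves strict monotonicity) and is bounded by $1$, and since $\tilde{\alpha}\leq\alpha_i$, both iULS and iUGATT remain valid with $\tilde{\alpha}$ in place of $\alpha_i$. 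By Proposition~\ref{prop:iUGATT_equiv_UGWA_Ult_iULS}, $\Sigmafp$ is UGWA, and I may choose the associated $\tau(\cdot,r_0)$ continuous and non-decreasing in its first argument with $\tau(r,r_0)=0$ for $r\leq r_0$. For $\|x\|_X\leq r$ and $d\in\Dc$, UGWA furnishes $t^{*}\leq\tau(r,r_0)$ with $\|\phi(t^{*},x,d)\|_X\leq r_0$; applying iULS to the trajectory starting at $\phi(t^{*},x,d)$ and using the cocycle property together with shift-invariance of $\Dc$ gives $\int_{t^{*}}^\infty\tilde{\alpha}(\|\phi\|_X)\,ds\leq\psi_2(r_0)$, while boundedness of $\tilde{\alpha}$ yields $\int_0^{t^{*}}\tilde{\alpha}(\|\phi\|_X)\,ds\leq t^{*}\leq\tau(r,r_0)$. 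Setting $\psi(r):=\psi_2(r)+\tau(r,r_0)+r$ produces a $\Kinf$-function (continuous, strictly increasing, unbounded, and vanishing at $0$ because $\tau(0,r_0)=0$) that dominates both partial estimates for every $r\geq 0$, which is exactly iUGS.
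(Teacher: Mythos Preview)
Your proof is correct and follows the same cyclic structure as the paper, invoking the same auxiliary results (Proposition~\ref{prop:From_ncLF_to_UGWA}, Proposition~\ref{prop:iUGATT_equiv_UGWA_Ult_iULS}, Lemma~\ref{lem:iULS_criterion}) at the same points, and sharing the key idea for $(v)\Rightarrow(vi)$ of passing to a \emph{bounded} weight $\tilde{\alpha}\in\K\setminus\Kinf$ so that the integral over any finite time interval is trivially finite.

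The only substantive difference is in how the two proofs exploit that bounded weight in $(v)\Rightarrow(vi)$. The paper bounds the tail $\int_\tau^\infty$ directly from iUGATT (choosing $\tau=\tau(R,\eps)$ so that the tail is at most $\eps$) and the head $\int_0^\tau$ by $\tau\cdot\alpha(\infty)$, obtaining a finite radius-dependent bound $\tilde{\sigma}(R)$; it then splices this ``practical'' iUGS estimate with the local iULS bound via a standard $\Kinf$-patching argument. You instead extract UGWA from iUGATT, use it to find a hitting time $t^{*}\leq\tau(r,r_0)$ into the iULS-region $\overline{B_{r_0}}$, bound the head $\int_0^{t^{*}}$ by $t^{*}$ (since $\tilde{\alpha}\leq 1$) and the tail by iULS via the cocycle property, and then write down the explicit $\Kinf$-majorant $\psi(r)=\psi_2(r)+\tau(r,r_0)+r$. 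Both routes are equally short; yours has the minor advantage of producing the iUGS bound $\psi$ in closed form rather than by an existence argument, while the paper's route avoids the small detour through UGWA and the need to regularize $\tau(\cdot,r_0)$. For $(vi)\Rightarrow(i)$ your invocation of a ``standard $\KL$-majorization lemma'' matches the paper's explicit use of Proposition~\ref{prop:Upper_estimate_as_KL_function}.
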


\begin{proof}
\textbf{(i) $\Rightarrow$ (ii).} Evident.

\textbf{(ii) $\Rightarrow$ (iii).} Follows by Proposition~\ref{prop:From_ncLF_to_UGWA}.

\textbf{(iii) $\Rightarrow$ (iv).} 
As $\Sigmafp$ is iULS it follows that it is an iREP.
Furthermore, since $0$ is UGWA and ultimately iULS with $\alpha\in\K$, then, by means of 
Proposition~\ref{prop:iUGATT_equiv_UGWA_Ult_iULS}, $0$ is iUGATT with the same $\alpha$.



\textbf{(iv) $\Rightarrow$ (v).} This follows directly from
Lemma~\ref{lem:iULS_criterion} and Proposition~\ref{prop:iUGATT_equiv_UGWA_Ult_iULS}.

\textbf{(v) $\Rightarrow$ (vi).}
Note that if $\Sigmafp$ is iULS or iUGATT for a certain $\alpha\in\K$,
then the same is true for any $\tilde{\alpha} \in {\cal K}$ with
$\tilde{\alpha}\leq\alpha$ (here $\leq$ is the pointwise ordering). 
Thus without loss of generality $\Sigmafp$ is both iULS and iUGATT for same $\alpha\in\K\backslash\Kinf$.

As $\Sigmafp$ is iUGATT we know that for each $R>0$ and $\eps>0$ there is $\tau:=\tau(R,\eps)$ so that
\begin{eqnarray*}
\sup_{x\in \overline{B_R},\;d\in\Dc} \int_\tau^\infty \alpha(\|\phi(s,x,d)\|_X)ds \leq \eps.
\end{eqnarray*}
Denoting $\alpha(\infty):=\lim_{s\to\infty}\alpha(s) <\infty$ we see that
\begin{eqnarray*}
\sup_{x\in \overline{B_R},\;d\in\Dc} \int_0^\tau \alpha(\|\phi(s,x,d)\|_X)ds \leq \tau \alpha(\infty),
\end{eqnarray*}
and hence, for all $R>0$,
\begin{eqnarray*}
\tilde{\sigma}(R):=  \sup_{x\in \overline{B_R},\;d\in\Dc} \int_0^\infty \alpha(\|\phi(s,x,d)\|_X)ds <\infty.
\end{eqnarray*}
Clearly, $\tilde{\sigma}$ is a non-decreasing function of $R$, and so
there exist  $\overline{\sigma}\in\Kinf$ and $c>0$ so that
$\tilde{\sigma}(r)\leq \overline{\sigma}(r) + c$ for all $r\in\R_+$.
Consequently, for any $x\in X$ and any $d\in\Dc$ we obtain
\begin{eqnarray}
\label{eq:practical_iUGS_estimate_tmp}
\int_0^\infty \alpha(\|\phi(s,x,d)\|_X)ds   \leq \overline{\sigma}(\|x\|_X) + c.
\end{eqnarray}
As $\Sigmafp$ is iULS, there exist $\psi\in\Kinf$ and $r>0$ so that
\begin{eqnarray}
\label{eq:_iULS_estimate_tmp}
\|x\|_X\leq r,\ d\in\Dc \srs \int_0^\infty \alpha(\|\phi(s,x,d)\|_X)ds   \leq \psi(\|x\|_X).
\end{eqnarray}
Using \eqref{eq:practical_iUGS_estimate_tmp},
\eqref{eq:_iULS_estimate_tmp} and standard manipulations of
$\Kinf$-functions (see e.g. \cite[proof of Lemma I.2, p. 1287]{SoW96}), it
may be seen that there is a $\sigma\in\Kinf$ so that 
\begin{eqnarray*}
x \in X,\ d\in\Dc \srs \int_0^\infty \alpha(\|\phi(s,x,d)\|_X)ds  \leq \sigma(\|x\|_X).
\end{eqnarray*}
This shows that $\Sigmafp$ is iUGS with $\alpha\in\K\backslash\Kinf$.

 \textbf{(vi) $\Rightarrow$ (i).}
 As in the previous step, without loss of generality we may assume that the function $\alpha$
 is the same in the definitions of iUGS and iUGATT. We now consider a
 fixed, suitable
 $\alpha$.

 Since $\Sigmafp$ is a iUGS fixed point, there exists $\psi\in\Kinf$ so that for all
 $t\geq0$, $\delta\geq0$, $\|x\|_X \leq \delta$, $d \in \Dc$ we have
 \begin{eqnarray}
 \int_0^\infty \alpha(\|\phi(t,x,d)\|_X) dt \leq \psi(\delta).
 \label{eq:TmpEstimate}
 \end{eqnarray}
 For a fixed $\delta\geq0$, define
 $\eps_n:=\frac{1}{2^n} \psi(\delta)$, $n \in \N$. 
 Let $\tau_0:=0$.
 As $\Sigmafp$ is iUGATT there exist times $\tau_n:=\tau(\eps_n,\delta)$,
 $n\geq1$, which we assume without loss of generality to be strictly increasing, such that 
 \[
 t \geq \tau_n,\ \|x\|_X \leq \delta,\ d \in \Dc \srs \int_t^\infty \alpha(\|\phi(s,x,d)\|_X) ds \leq \eps_n.
 \]
 Define $\omega(\delta,0):= 2 \psi(\delta)$ and $\omega(\delta,\tau_n):=\eps_{n-1}$, for $n \in \N$, $n \neq 0$. Extend the function $\omega(\delta,\cdot)$ to $\R_+$ so that $\omega(\delta,\cdot) \in \LL$. 
 Note that for any $n\in\N$ and for all $t \in (\tau_n,\tau_{n+1})$ it holds that 
 \begin{eqnarray}
 \int_t^\infty \alpha(\|\phi(s,x,d)\|_X) ds \leq \eps_n < \omega(\delta,t).
 \label{eq:Tmp_Estimate}
 \end{eqnarray}
 Doing this for all $\delta \in \R_+$ we obtain a function $\omega: \R_+^2\to\R_+$.

 Now define $\tilde\beta(r,t):=\sup_{0 \leq s \leq r}\omega(s,t) \geq \omega(r,t)$. Obviously, $\tilde\beta$ is non-decreasing in the first argument and decreasing in the second. 
 Moreover, for each fixed $t\geq0$, $\tilde \beta(r,t) \leq \sup_{0 \leq s \leq r}\omega(s,0)=2\psi(r)$, which implies that $\tilde\beta$ is continuous in the first argument at $r=0$ for any fixed $t\geq0$.
 Now Proposition~\ref{prop:Upper_estimate_as_KL_function} implies that $(r,t) \mapsto \tilde\beta(r,t) + |r|e^{-t}$ may be upper bounded by $\beta\in \KL$ and the estimate 
 \eqref{eq:integral_UGAS}
 is satisfied with such a $\beta$.
\end{proof}

\begin{remark}
\label{rem:integral_notions_with_Kinf_functions} 
Note that in all the integral notions we have assumed that the corresponding function $\alpha$ belongs to the class $\K$.
If we require in the definitions that $\alpha$ must belong to the class
$\Kinf$, we obtain stronger versions of the corresponding concepts.
The difference is that every forward-complete system is automatically
$\alpha$-integrally RFC with $\alpha\in\K\backslash\Kinf$ (see Remark~\ref{rem:RFC_and_iRFC}), 
but it need not be $\alpha$-integrally RFC for all $\alpha\in\Kinf$.

For the stronger concepts the proof of (iv) $\Rightarrow$ (i) in
Theorem~\ref{thm:iUGAS_characterization} does not work as described. 
In order to close the gap in the proof the stronger version of this implication, we need to
strengthen the assumptions in items
(iii), (iv) by assuming in addition 
that the system is  $\alpha$-iRFC with a certain $\alpha\in\Kinf$.
Then after some minor modifications we recover the characterization of
iUGAS with $\alpha\in\Kinf$.

We leave the details to the reader.
\end{remark}

\section{``Integral'' characterization of the UGAS property}
\label{sec:UGAS_characterizations}

Until now we have worked nearly completely on the level of the ``integral''
notions, which is almost parallel to the world of classic notions of stability. Now we are going to relate ``integral'' and ``classic'' worlds.

The next proposition shows that classic stability properties can be
recovered from the ``integral'' version combined with the REP property.

\begin{proposition}
\label{prop:Influence_of_REP_and_RFC}
Consider a forward complete system $\Sigma=(X,\Dc,\phi)$. Then the following holds:
\begin{itemize}
	\item[(i)] \label{prop:From_ncLF_to_LS} If $\Sigmafp$ is a REP and iULS, then $\Sigmafp$ is ULS.
	\item[(ii)] \label{prop:NewProp_and_REP_imply_UGATT} 
	If $\Sigmafp$ is a REP and iUGATT, then $\Sigmafp$ is UGATT and UAS.
\end{itemize}
\end{proposition}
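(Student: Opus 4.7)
My plan is to prove (i) by a contradiction argument that uses REP together with the cocycle property to pin a uniform positive lower bound on the trajectory norm over a window of fixed length, and then to obtain (ii) by combining the same technique with the iUGAS characterization of Theorem~\ref{thm:iUGAS_characterization}.

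For (i), I assume $\Sigmafp$ is iULS (with weight $\alpha\in\K$ and comparison $\psi\in\Kinf$ valid on a ball $\overline{B_r}$) and a REP, but suppose for contradiction that ULS fails. This yields $\eps^*>0$ and sequences $x_n, d_n, t_n$ with $\|x_n\|_X\to 0$ and $\|\phi(t_n,x_n,d_n)\|_X>\eps^*$. Fixing any $h>0$ and letting $\delta^*:=\delta(\eps^*,h)>0$ be the REP constant, REP applied at the initial condition $x_n$ already forces $t_n>h$ for large $n$. The key step is: for every $s\in[t_n-h,t_n]$, setting $y_s:=\phi(s,x_n,d_n)$ and $\tilde d_s:=d_n(s+\cdot)\in\Dc$, the cocycle gives $\phi(t_n-s,y_s,\tilde d_s)=\phi(t_n,x_n,d_n)$, whose norm exceeds $\eps^*$; if one had $\|y_s\|_X\le\delta^*$, REP with parameters $(\eps^*,h)$ would bound the left-hand side by $\eps^*$ (note $t_n-s\in[0,h]$), a contradiction. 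Hence $\|y_s\|_X>\delta^*$ throughout $[t_n-h,t_n]$, and so
\[
\int_0^\infty \alpha(\|\phi(s,x_n,d_n)\|_X)\,ds \;\ge\; h\,\alpha(\delta^*) > 0
\]
for all large $n$, contradicting the iULS estimate $\int\le\psi(\|x_n\|_X)\to 0$.

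For (ii), the UGATT half follows by the identical scheme: if UGATT failed we would obtain $r^*,\eps^*$ and sequences $x_n\in\overline{B_{r^*}}$, $d_n\in\Dc$, $t_n\to\infty$ with $\|\phi(t_n,x_n,d_n)\|_X>\eps^*$; the same lower-bound argument gives $\int_{t_n-h}^\infty\alpha(\|\phi(s,x_n,d_n)\|_X)\,ds\ge h\alpha(\delta^*)$, which contradicts iUGATT on $\overline{B_{r^*}}$ since $t_n-h\to\infty$. To upgrade to UAS, I will chain the results already available: REP implies iREP by Lemma~\ref{lem:REP_implies_iREP}; combined with iUGATT, the implication (iv)$\Rightarrow$(i) of Theorem~\ref{thm:iUGAS_characterization} yields iUGAS, hence iULS; and part (i) just proved then promotes iULS to ULS. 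Finally, ULS together with the attractivity furnished by UGATT (applied on a small ball) assembles into a $\KL$-estimate on that ball: one takes a pointwise minimum of the local-stability envelope $\sigma(\|x\|_X)$ and a decreasing envelope $\eta(t)$ extracted from UGATT (vanishing at infinity since $\eta$ is built from $\tau(r,\cdot)$), and absorbs the resulting bound into a $\KL$ function by Proposition~\ref{prop:Upper_estimate_as_KL_function}.

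The only nontrivial step is the uniform lower bound $\|\phi(s,x_n,d_n)\|_X>\delta^*$ across the full window $[t_n-h,t_n]$. It comes out cleanly precisely because REP controls the trajectory uniformly on the whole interval $[0,h]$ and not merely at the endpoint: applied through the cocycle at every backward time $t_n-s\in[0,h]$ simultaneously, its contrapositive delivers the single threshold $\delta^*=\delta(\eps^*,h)$ independent of $s$, so no monotonicity assumption on $\delta(\eps^*,\cdot)$ is needed. Everything else---the contradictions with iULS and iUGATT, and the final $\KL$-construction for UAS---is routine.
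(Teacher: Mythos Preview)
Your proof is correct. For part (i) the argument is essentially the paper's, but streamlined: where the paper introduces the last exit time $\tilde t_k:=\sup\{t\in[0,t_k]:\|\phi(t,x_k,d_k)\|_X\le\delta\}$ and splits into the two cases $\eta_k:=t_k-\tilde t_k\not\to 0$ and $\eta_k\to 0$, you apply the contrapositive of REP simultaneously across the window $[t_n-h,t_n]$ and obtain the uniform lower bound $\|\phi(s,x_n,d_n)\|_X>\delta^*$ in one stroke, bypassing the case distinction.

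For part (ii) your route to UGATT differs from the paper's. The paper proceeds indirectly: from iUGATT it extracts ultimate iULS, combines it with iREP (from REP via Lemma~\ref{lem:REP_implies_iREP}) and Lemma~\ref{lem:iULS_criterion} to get iULS, upgrades to ULS via part (i), then reaches UGATT through the chain iUGATT $\Rightarrow$ UGWA (Proposition~\ref{prop:iUGATT_equiv_UGWA_Ult_iULS}) and UGWA $\wedge$ ultimate ULS $\Rightarrow$ UGATT (Proposition~\ref{prop:UGATT_equiv_UGWA_Ult_ULS}). You instead repeat the window argument of (i) directly against the definition of iUGATT, which is shorter and self-contained, avoiding the intermediate structural characterizations; the paper's route, on the other hand, makes the ``atomic'' decomposition explicit. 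For UAS both approaches coincide: obtain ULS through part (i) and combine with UGATT via a standard $\KL$-envelope construction (your appeal to Proposition~\ref{prop:Upper_estimate_as_KL_function} is exactly what is needed).
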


\begin{remark}
    We note that in \cite[Theorem 3.1]{Mir17a} it is shown that if
    $\Sigma$ is RFC and $\Sigmafp$ is iUGATT or iUGS or iUGAS, then
    $\Sigma$ satisfies a property that is termed practically UGAS in
    \cite{Mir17a} and which amounts to saying that not the fixed point
    $\Sigmafp$ but a certain neighborhood of it has a stability property.
\end{remark}

\begin{proof} (of Proposition~\ref{prop:Influence_of_REP_and_RFC}).

\textbf{(i).}
Seeking a contradiction, assume that $\Sigma$ is not uniformly stable in $x^* = 0$. Then there exist an
$\varepsilon >0$ and sequences $\{ x_k \}_{k\in\N}$ in $X$, $\{ d_k
\}_{k\in\N}$ in $\mathcal{D}$, and $t_k \geq 0$ such that $x_k \to 0$ as $k \to \infty$ and
\begin{equation*}
    \| \phi(t_k,x_k,d_k) \|_X = \varepsilon \quad \forall k \geq 1.
\end{equation*}

Since $\Sigmafp$ is iULS, there are $\alpha\in\K$ and $\psi\in\Kinf$ so that for the above $\eps$ there is a $\delta_1=\delta_1(\varepsilon)>0$ satisfying
\begin{eqnarray}
\|x\|_X\leq \delta_1,\ d\in\Dc \srs \int_0^\infty \alpha\big(\|\phi(s,x,d\|_X\big)ds \leq \psi(\|x\|_X).
\label{eq:iULS_tmp}
\end{eqnarray}
Without loss of generality we assume that $\|x_k\|_X\leq \delta_1$ for all $k\in\N$ (otherwise we can pick a subsequence of $\{x_k\}$ with this property).

Since $0$ is a REP, for the above $\eps$ there is a $\delta = \delta(\eps,1)$ so that
\begin{eqnarray}
\|x\|_X\leq \delta,\ t\in [0,1],\ d\in\Dc \qrq \|\phi(t,x,d)\|_X\leq \frac{\eps}{2}.
\label{eq:ncLF_REP_application}
\end{eqnarray}
Define for this $\delta$ the following quantities:
\[
\tilde t_k := \sup\{t\in[0,t_k]: \|\phi(t,x_k,d_k)\|_X \leq \delta\},
\]
provided the supremum is taken over a nonempty set, and $\tilde{t}_k:=0$ otherwise.
Denote also $\eta_k:=t_k - \tilde t_k$, $k\in\N$. There are two possibilities.

First assume that $\{\eta_k\}_{k\in\N}$ does not converge to 0 as $k\to\infty$.
Then there is a $\eta^*>0$ and a subsequence of $\{\eta_{k_m}\}$ so that $\eta_{k_m} \geq \eta^*$ for all $m\geq 1$.

Using \eqref{eq:iULS_tmp} for $x:=x_{k_m}$, $d:=d_{k_m}$ and $t:=t_{k_m}$, we see that
\begin{eqnarray*}
\eta^* \alpha(\delta) \leq \eta_{k_m} \alpha(\delta) 
\leq \psi(\|x_{k_m}\|_X).
\end{eqnarray*}
Since $\psi(\|x_{k_m}\|_X)\to0$ as $m\to\infty$, we obtain a contradiction.

Now assume that $\eta_k\to 0$ as $k\to\infty$. Then there is a $k_1>0$ so that $\eta_{k_1}<1$.
In view of a cocycle property, we have that
\begin{eqnarray*}
\phi(t_k,x_k,d_k) = \phi(\eta_k,\phi(\tilde t_k,x_k,d_k),d_k(\cdot + \tilde t_k)).
\end{eqnarray*}
Since $\|\phi(\tilde t_k,x_k,d_k)\|_X \leq \delta$, by
\eqref{eq:ncLF_REP_application} we obtain 
$\|\phi(t_k,x_k,d_k)\|_X \leq  \frac{\eps}{2}$,
which contradicts to the assumption that $\|\phi(t_k,x_k,d_k)\|_X = \eps$.
This shows uniform stability of $\Sigmafp$.

\textbf{(ii).}
It is easy to see that iUGATT implies ultimate iULS. According to
Lemma~\ref{lem:REP_implies_iREP}, $\Sigmafp$ is an iREP.
Using Lemma~\ref{lem:iULS_criterion} 
and Proposition~\ref{prop:Influence_of_REP_and_RFC}~(i) we have that $\Sigmafp$ is ULS.

Furthermore, by Proposition~\ref{prop:iUGATT_equiv_UGWA_Ult_iULS}  the equilibrium point $\Sigmafp$ is UGWA, and Proposition~\ref{prop:UGATT_equiv_UGWA_Ult_ULS}
shows that $\Sigmafp$ is UGATT.
Finally, since $\Sigmafp$ is UGATT and ULS, then $\Sigmafp$ is UAS as well.
%
\end{proof}

Next we show criteria for UGAS in terms of integral stability
notions. To this end we need two technical results. The first one is
Sontag's well-known $\KL$-lemma \cite[Proposition 7]{Son98}:
\begin{lemma}
\label{Sontags_KL_Lemma}
For all $\beta \in \KL$ there exist $\alpha_1,\alpha_2 \in \Kinf$ with
\begin{equation}
\beta(r,t) \leq \alpha_2(\alpha_1(r)e^{-t}) \quad \forall r \geq 0, \; \forall t \geq 0.
\label{eq:KL-Lemma_Estimate}
\end{equation}
\end{lemma}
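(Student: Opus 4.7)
The plan is to construct $\alpha_1$ and $\alpha_2$ via a two-step argument. \emph{Step one} is a smoothing reduction: by standard majorization for class-$\KL$ functions, I can replace $\beta$ with a continuous $\KL$ upper bound that is strictly decreasing in $t$, strictly increasing in $r$, and satisfies $\beta(r,t)\to 0$ as $t\to\infty$ for every fixed $r>0$. From here on I work with this smoothed $\beta$.

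\emph{Step two} is the core construction. Observe that the desired estimate \eqref{eq:KL-Lemma_Estimate} is equivalent to
\[
\alpha_2^{-1}(\beta(r,t))\,e^{t}\;\leq\;\alpha_1(r),\qquad r,t\geq 0,
\]
so I need to find $\alpha_2\in\Kinf$ whose inverse vanishes rapidly enough at zero to absorb the exponential factor $e^{t}$, leaving a quantity controllable by a $\Kinf$-function of $r$ alone. Concretely, for each $n\in\N$ I would first pick $T_n\geq 0$ (nondecreasing in $n$) with $\beta(n,T_n)\leq 2^{-n}$; this is possible because $\beta(n,\cdot)\to 0$. I then construct $\alpha_2^{-1}\in\Kinf$ by piecewise linear interpolation so that
\[
\alpha_2^{-1}(2^{-n})\;\leq\; 2^{-n}\,e^{-T_{n+1}} \qquad \forall n\in\N,
\]
choosing the prescribed values so as to preserve strict monotonicity, and extending by $\alpha_2^{-1}(0)=0$ and an unbounded affine tail at infinity.

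With $\alpha_2$ in hand, I would verify the key bound by fixing $r\in[n_0,n_0+1]$ and splitting into two cases. For $t\leq T_{n_0+1}$ monotonicity of $\beta$ in both variables gives
\[
\alpha_2^{-1}(\beta(r,t))\,e^{t}\;\leq\;\alpha_2^{-1}(\beta(n_0+1,0))\,e^{T_{n_0+1}},
\]
which depends only on $n_0$. For $t\in[T_k,T_{k+1}]$ with $k\geq n_0+1$, monotonicity yields $\beta(r,t)\leq\beta(k,T_k)\leq 2^{-k}$, so
\[
\alpha_2^{-1}(\beta(r,t))\,e^{t}\;\leq\;\alpha_2^{-1}(2^{-k})\,e^{T_{k+1}}\;\leq\;2^{-k},
\]
by the defining inequality for $\alpha_2^{-1}$. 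Thus the supremum over $t\geq 0$ is bounded by a quantity $M(n_0)$ depending only on $n_0$, and I finish by selecting any $\alpha_1\in\Kinf$ satisfying $\alpha_1(r)\geq M(\lceil r\rceil)$ for all $r>0$.

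The main technical obstacle is the interpolation step: the function $\alpha_2^{-1}$ must realize countably many prescribed inequalities while remaining a bona fide $\Kinf$-function. Since the $T_n$ are chosen nondecreasing and the target values $2^{-n}e^{-T_{n+1}}$ are positive, a careful piecewise-linear interpolation (adjusting the prescribed values downward when necessary to preserve strict monotonicity) suffices. Once $\alpha_2^{-1}$ is correctly tuned to the decay rate of $\beta$ along the grid $\{(n,T_n)\}$, the remainder of the argument is essentially monotonicity-driven bookkeeping.
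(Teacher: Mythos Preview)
The paper does not prove this lemma; it is quoted verbatim as Sontag's well-known $\KL$-lemma with a reference to \cite[Proposition 7]{Son98}, so there is no ``paper's own proof'' to compare against. Your construction is in the spirit of the standard argument and is essentially correct.

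One small gap deserves attention: your case analysis for large~$t$ (``for $t\in[T_k,T_{k+1}]$ with $k\geq n_0+1$'') tacitly assumes that these intervals exhaust $[T_{n_0+1},\infty)$, i.e.\ that $T_k\to\infty$. As stated you only require the $T_n$ to be nondecreasing, and nothing prevents them from being bounded (for instance if $\beta(n,0)\leq 2^{-n}$ already for large $n$, one could take $T_n\equiv 0$). The fix is immediate---replace $T_n$ by $\max(T_n,n)$, which preserves $\beta(n,T_n)\leq 2^{-n}$ by monotonicity of $\beta$ in $t$ and forces $T_n\to\infty$---but it should be said explicitly. With that adjustment the interpolation for $\alpha_2^{-1}$ and the final majorization by $\alpha_1\in\Kinf$ go through as you describe.
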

The second one is a characterization of UGAS in terms of the UGATT property from \cite[Theorem 2.2]{KaJ11b}:
\begin{proposition}
\label{prop:UGAS_Characterization}
Consider $\Sigma=(X,\Dc,\phi)$.
Then $\Sigmafp$ is UGAS if and only if $\Sigma$ is robustly forward complete and
$0$ is a UGATT robust equilibrium point for $\Sigma$.
\end{proposition}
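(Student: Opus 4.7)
The plan is as follows. The ``only if'' direction is routine: from $\|\phi(t,x,d)\|_X \leq \beta(\|x\|_X,t)$ with $\beta\in\KL$ one immediately extracts RFC (since $\beta(C,0)<\infty$ provides the required uniform bound on bounded initial data and finite time intervals), UGATT (given $r,\eps>0$, pick $\tau(r,\eps)$ with $\beta(r,\tau)\leq\eps$, which is possible since $\beta(r,\cdot)$ decreases to $0$), and the REP property (given $\eps,h>0$, pick $\delta>0$ so small that $\beta(\delta,0)\leq\eps$, using $\beta(\cdot,0)\in\K$; then $\|\phi(t,x,d)\|_X\leq \beta(\|x\|_X,0)\leq\eps$ for $t\in[0,h]$).

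For the ``if'' direction, I first derive ULS from REP together with UGATT. Fix $\eps>0$; applying UGATT with $r:=1$ gives $\tau:=\tau(1,\eps)$ such that $\|\phi(t,x,d)\|_X\leq\eps$ whenever $\|x\|_X\leq 1$, $d\in\Dc$, $t\geq\tau$. Applying REP with $h:=\tau$ gives $\delta:=\delta(\eps,\tau)>0$ with $\|\phi(t,x,d)\|_X\leq\eps$ whenever $\|x\|_X\leq\delta$, $d\in\Dc$, $t\in[0,\tau]$. Replacing $\delta$ by $\min\{\delta,1\}$ yields ULS. Second, I combine RFC with UGATT to obtain uniform global stability: for each $r>0$, UGATT with $\eps:=1$ bounds the tail $t\geq\tau(r,1)$ by $1$, while RFC bounds $\sup\{\|\phi(t,x,d)\|_X:\|x\|_X\leq r,\,d\in\Dc,\,t\in[0,\tau(r,1)]\}$ by some finite $M(r)$. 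The function $r\mapsto\max\{M(r),1\}$ is non-decreasing and can be majorized by some $\sigma\in\Kinf$, yielding the estimate $\|\phi(t,x,d)\|_X\leq\sigma(\|x\|_X)$ for all $t\geq 0$, $x\in X$, $d\in\Dc$.

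Finally, from this UGS estimate and UGATT I construct $\beta\in\KL$ with $\|\phi(t,x,d)\|_X\leq\beta(\|x\|_X,t)$ by an argument paralleling the proof of (vi)$\,\Rightarrow\,$(i) in Theorem~\ref{thm:iUGAS_characterization}. For fixed $r>0$, use UGATT to pick an increasing sequence $\tau_n:=\tau(r,2^{-n}\sigma(r))$, interpolate to obtain a function $\omega(r,\cdot)\in\LL$ whose values at $\tau_n$ dominate the corresponding trajectory norms, monotonize in the first argument by setting $\tilde\beta(r,t):=\sup_{0\leq s\leq r}\omega(s,t)$, and then upper bound $\tilde\beta(r,t)+re^{-t}$ by a $\KL$ function via Proposition~\ref{prop:Upper_estimate_as_KL_function}. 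Alternatively, one can apply Sontag's $\KL$-lemma (Lemma~\ref{Sontags_KL_Lemma}) after a preliminary time rescaling to reduce to exponential decay composed with $\Kinf$ functions.

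The main obstacle is this last step: converting the countably many pointwise UGATT estimates (at the rates $2^{-n}\sigma(r)$) into a single jointly monotone $\KL$ bound. The subtlety lies in simultaneously achieving monotonicity in $\|x\|_X$ while preserving decay to zero in $t$, and in ensuring continuity of $\tilde\beta(\cdot,t)$ at $r=0$ so that the resulting majorant truly lies in $\KL$; the construction in the proof of Theorem~\ref{thm:iUGAS_characterization} handles precisely this difficulty, and the same bookkeeping transfers directly to the present setting.
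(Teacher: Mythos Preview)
The paper does not prove this proposition; it quotes it as \cite[Theorem~2.2]{KaJ11b}. So there is no ``paper's own proof'' to compare against, and your argument is a self-contained reconstruction.

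Your strategy (only-if routine; for the if-direction: REP\,$\wedge$\,UGATT\,$\Rightarrow$\,ULS, then RFC\,$\wedge$\,UGATT\,$\Rightarrow$\,global boundedness, then UGS\,$\wedge$\,UGATT\,$\Rightarrow$\,$\KL$-estimate via the construction in Theorem~\ref{thm:iUGAS_characterization}) is sound in outline, but there is one genuine slip in the second step. You claim that $r\mapsto\max\{M(r),1\}$ ``can be majorized by some $\sigma\in\Kinf$''. This is false: $\max\{M(r),1\}\geq 1$ for all $r>0$, so any continuous majorant $\sigma$ must satisfy $\sigma(0)\geq 1$, contradicting $\sigma\in\Kinf$. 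What RFC\,$\wedge$\,UGATT alone gives you is only an estimate of the form $\|\phi(t,x,d)\|_X\leq \bar\sigma(\|x\|_X)+c$ with $\bar\sigma\in\Kinf$ and some $c>0$, i.e.\ practical UGS, not UGS.

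The fix is immediate and uses the ingredient you already have: combine the global bound $N(r):=\max\{M(r),1\}$ with the ULS established in your first step. The function $\tilde\sigma(r):=\sup\{\|\phi(t,x,d)\|_X:\|x\|_X\leq r,\ d\in\Dc,\ t\geq 0\}$ is non-decreasing, finite for each $r$ (by your RFC\,$\wedge$\,UGATT argument), and satisfies $\tilde\sigma(r)\to 0$ as $r\to 0^+$ (by ULS). Hence $\tilde\sigma$ can be majorized by some $\sigma\in\Kinf$, exactly as in the patching argument in the proof of (v)\,$\Rightarrow$\,(vi) in Theorem~\ref{thm:iUGAS_characterization} (cf.\ \cite[proof of Lemma~I.2]{SoW96}). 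With UGS thus secured, your third step (transplanting the (vi)\,$\Rightarrow$\,(i) construction of Theorem~\ref{thm:iUGAS_characterization} and invoking Proposition~\ref{prop:Upper_estimate_as_KL_function}) goes through; in particular the continuity of $\tilde\beta(\cdot,t)$ at $r=0$, which you correctly flag as the subtle point, follows from $\tilde\beta(r,t)\leq 2\sigma(r)$.
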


The main result of this section is:
\begin{theorem}
\label{thm:UGAS_characterization} 
Consider a forward complete system $\Sigma=(X,\Dc,\phi)$. Then the following statements are equivalent:
\begin{enumerate}[(i)]
	\item $\Sigmafp$ is UGAS.
	\item $\Sigma$ is RFC and $\Sigmafp$ is a REP $\wedge$ iUGAS.
	\item $\Sigma$ is RFC and $\Sigmafp$ is a REP $\wedge$ iUGATT.
	\item $\Sigma$ is RFC and $\Sigmafp$ is a REP $\wedge$ UGWA $\wedge$ ultimately iULS.
	\item $\Sigma$ is RFC and $\Sigmafp$ is a REP $\wedge$ UGWA $\wedge$ ultimately ULS.
\end{enumerate}
\end{theorem}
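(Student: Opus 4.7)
My plan is to prove the chain of implications (i)$\Rightarrow$(ii)$\Rightarrow$(iii)$\Rightarrow$(iv)$\Rightarrow$(v)$\Rightarrow$(i), so that the five statements collapse into a single equivalence class. Several of the links are already hard-wired into the propositions gathered earlier in the paper; the real content lies in two places: promoting UGAS to iUGAS (to close the loop at the top) and extracting UGATT from the REP + ``integral'' data (to close it at the bottom).

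For \textbf{(i)$\Rightarrow$(ii)}, the RFC and REP conclusions are immediate from a UGAS estimate $\|\phi(t,x,d)\|_X\leq\beta(\|x\|_X,t)$ because $\beta(\cdot,0)\in\K$ furnishes both the bounded reachability sets and the $\delta(\eps,h)$ of Definition~\ref{def:RobustEqPoint}. For iUGAS I invoke Sontag's $\KL$-lemma (Lemma~\ref{Sontags_KL_Lemma}) to write $\beta(r,t)\leq\alpha_2(\alpha_1(r)e^{-t})$ with $\alpha_1,\alpha_2\in\Kinf$, and then choose $\alpha:=\alpha_2^{-1}\in\Kinf$. For this choice
\begin{equation*}
\alpha\bigl(\|\phi(s,x,d)\|_X\bigr)\leq\alpha_2^{-1}\!\bigl(\alpha_2(\alpha_1(\|x\|_X)e^{-s})\bigr)=\alpha_1(\|x\|_X)e^{-s},
\end{equation*}
so integrating from $t$ to $\infty$ gives $\int_t^\infty\alpha(\|\phi(s,x,d)\|_X)\,ds\leq\alpha_1(\|x\|_X)e^{-t}$, which is an admissible $\KL$ bound in the sense of \eqref{eq:integral_UGAS}. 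The implication \textbf{(ii)$\Rightarrow$(iii)} is immediate from the definitions, since iUGAS clearly entails iUGATT.

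For \textbf{(iii)$\Rightarrow$(iv)}, the REP hypothesis gives iREP via Lemma~\ref{lem:REP_implies_iREP}, while iUGATT splits into UGWA $\wedge$ ultimately iULS by Proposition~\ref{prop:iUGATT_equiv_UGWA_Ult_iULS}. For \textbf{(iv)$\Rightarrow$(v)}, I again apply Proposition~\ref{prop:iUGATT_equiv_UGWA_Ult_iULS} in the reverse direction to recover iUGATT from UGWA and ultimately iULS, then combine with REP via Proposition~\ref{prop:Influence_of_REP_and_RFC}(ii) to obtain UGATT; ultimately ULS follows trivially from UGATT by fixing any $r>0$ (say $r=1$) and setting $\delta(\eps):=1$, $T(\eps):=\tau(1,\eps)$ in Definition~\ref{def:Ultimate_uniform_stability}. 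Finally, for \textbf{(v)$\Rightarrow$(i)}, Proposition~\ref{prop:UGATT_equiv_UGWA_Ult_ULS} assembles UGWA and ultimately ULS into UGATT, and Proposition~\ref{prop:UGAS_Characterization} then converts the package RFC $\wedge$ REP $\wedge$ UGATT directly into UGAS.

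The main obstacle I anticipate is the \textbf{(i)$\Rightarrow$(ii)} step: everything else is a fairly mechanical bookkeeping of what has been established in Sections 2--3, but producing a good $\alpha$ for the iUGAS estimate from a generic $\KL$ function requires the Sontag-lemma trick and the choice $\alpha=\alpha_2^{-1}$, without which the integral of $\alpha(\|\phi(s,x,d)\|_X)$ need not converge at infinity. Once this bridge is in place, the cycle closes cleanly by citation.
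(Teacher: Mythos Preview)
Your proof is correct and follows essentially the same route as the paper: the Sontag $\KL$-lemma with $\alpha:=\alpha_2^{-1}$ for (i)$\Rightarrow$(ii), and then citations of Propositions~\ref{prop:iUGATT_equiv_UGWA_Ult_iULS}, \ref{prop:Influence_of_REP_and_RFC}(ii), \ref{prop:UGATT_equiv_UGWA_Ult_ULS}, and \ref{prop:UGAS_Characterization} for the remaining links. The only cosmetic difference is that the paper organizes the equivalences as (i)$\Rightarrow$(ii)$\Rightarrow$(iii)$\Leftrightarrow$(iv), (iii)$\Rightarrow$(v)$\Rightarrow$(i) rather than a single linear chain, and your mention of iREP in (iii)$\Rightarrow$(iv) is superfluous since item~(iv) retains REP, not iREP.
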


\begin{proof}
\textbf{(i) $\Rightarrow$ (ii).} 
Since $\Sigmafp$ is UGAS, there is a $\beta\in\KL$ so that \eqref{UGAS_wrt_D_estimate} holds.
In view of Lemma~\ref{Sontags_KL_Lemma} there are $\alpha_1,\alpha_2\in\Kinf$ so that 
\eqref{eq:KL-Lemma_Estimate} holds.
Set $\alpha:=\alpha^{-1}_2$. Then we have for any $r>0$ and any $t>0$ it holds that:
\begin{multline*}
\hspace*{-0.3cm}\sup_{x\in \overline{B_r},\ d\in\Dc} \int_t^\infty \alpha(\|\phi(s,x,d)\|_X)ds  \leq
\sup_{x\in \overline{B_r}} \int_t^\infty \alpha(\beta(\|x\|_X,s))ds  \\
\leq  \int_t^\infty \alpha_1(r)e^{-s}ds 
= \alpha_1(r)e^{-t}
\end{multline*}
and $\Sigmafp$ is iUGAS with $\psi:=\alpha_1 \in\Kinf$ and $\alpha\in\Kinf$.

\textbf{(ii) $\Rightarrow$ (iii).} Clear.

\textbf{(iii) $\Iff$ (iv).} Follows from Proposition~\ref{prop:iUGATT_equiv_UGWA_Ult_iULS}

\textbf{(iii) $\Rightarrow$ (v).} Follows from Proposition~\ref{prop:Influence_of_REP_and_RFC}, item (ii).

\textbf{(v) $\Rightarrow$ (i).} Follows from Propositions~\ref{prop:UGATT_equiv_UGWA_Ult_ULS} and \ref{prop:UGAS_Characterization}.
\end{proof}

\begin{remark}[``Atomic decompositions'']
\label{rem:} 
Items (iv) and (v) of Theorem~\ref{thm:UGAS_characterization} give a decomposition of UGAS into elementary stability notions. In some sense the notions of UGWA, REP, RFC and ultimate ULS and their integral counterparts iREP, ultimate iULS and (possibly) iRFC are the ``atoms'' by combinations of which the other stability notions can be constructed.

Comparing items (iv) and (v) of Theorem~\ref{thm:UGAS_characterization} to
the analogous ``atomic'' decompositions of iUGAS shown in
Theorem~\ref{thm:iUGAS_characterization}, we see that the notion of UGWA plays a remarkable role in such characterizations, supported by the integral variants of REP and ultimate ULS.
Uniform global weak attractivity is the common point of the worlds of classic and integral notions, which are otherwise largely parallel.
\end{remark}

\begin{remark}
It is worth mentioning that for the special case of linear systems over Banach spaces without disturbances the notions of UGAS, iUGAS and UGWA  coincide, as can be seen from \cite[Proposition 5.1]{Mir17a}.
\end{remark}

\section{Non-coercive Lyapunov theorems}
\label{sec:Non-coercive_LFs}

In this section we relate the existence of noncoercive Lyapunov functions
to the integral stability concepts we have introduced. It is shown that
for forward complete systems the existence of noncoercive Lyapunov
functions implies iUGAS.
In the next step we treat a converse
result. 

\subsection{Direct Lyapunov theorems}

For the proof of direct Lyapunov theorems we need the generalized Newton-Leibniz formula (see \cite[Theorem 7.3, p. 204-205]{Sak47} and the comments directly after that result):
\begin{proposition}
\label{prop:Generalized_Newton-Leibniz_from_Saks_p204} 
Suppose that $F:\R \to \R$ is a function\footnote{In the formulation of \cite[Theorem 7.3, p. 204-205]{Sak47} the terminology that $F$ is a finite function is used, which means that $F(x) \in\R$ for any $x\in\R$ (see \cite[p. 6]{Sak47}).} such that for all $x\in\R$ we have
\begin{eqnarray}
\mathop{\overline{\lim}} \limits_{h \rightarrow +0} F(x-h) \leq F(x) \leq 
\mathop{\overline{\lim}} \limits_{h \rightarrow +0} F(x+h).
\label{eq:Nice_Discontinuity}
\end{eqnarray}
Let $g$ be a Perron-integrable\footnote{For a definition of Perron
  integrability see e.g. \cite[p. 201]{Sak47}. The (P) in front of the
  integral in \eqref{eq:Generalized_Newton-Leibniz} indicates that this is
a Perron integral.} function of a real variable satisfying 
$D^+F(x) \geq g(x)$ for all $x\in I$.
Then for all $a,b >0$: $a<b$ it holds that 
\begin{eqnarray}
F(b) - F(a) \geq (P)\int_a^b g(x) dx.
\label{eq:Generalized_Newton-Leibniz}
\end{eqnarray}
\end{proposition}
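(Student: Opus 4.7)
The plan is to reduce the inequality to a monotonicity statement via the auxiliary function
\[
H(x) := F(x) - (P)\int_a^x g(t)\,dt,\qquad x\in[a,b],
\]
so that \eqref{eq:Generalized_Newton-Leibniz} is exactly $H(b)\geq H(a)$. The strategy is to show $D^+H(x)\geq 0$ at every $x$ outside a negligible set, verify that $H$ inherits the one-sided semi-continuity condition \eqref{eq:Nice_Discontinuity} from $F$, and then invoke a classical monotonicity theorem for functions with non-negative upper right Dini derivative.

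First I would recall the structural properties of the indefinite Perron integral $\Phi(x):=(P)\int_a^x g(t)\,dt$. By the basic theory in \cite{Sak47}, $\Phi$ is continuous on $[a,b]$, it is generalized absolutely continuous in the restricted sense (ACG${}_*$), and there is a set $N\subset[a,b]$, negligible with respect to the Perron monotonicity theorem, such that $\Phi$ is ordinarily differentiable on $[a,b]\setminus N$ with $\Phi'(x)=g(x)$. At such points the subadditivity of $\limsup$ yields
\[
D^+H(x)\;\geq\;D^+F(x)-\Phi'(x)\;\geq\;g(x)-g(x)\;=\;0.
\]
Since $\Phi$ is continuous, the semi-continuity inequalities \eqref{eq:Nice_Discontinuity} for $F$ transfer verbatim to $H$.

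Second, I would apply the appropriate monotonicity theorem: a function $H$ on $[a,b]$ satisfying \eqref{eq:Nice_Discontinuity} at every point and having $D^+H(x)\geq 0$ outside a set $N$ admissible for the Perron integral must be non-decreasing on $[a,b]$. This is the monotone convergence/major function argument that underlies Saks's treatment of Perron integration; the condition \eqref{eq:Nice_Discontinuity} is exactly what is needed to rule out downward jumps, since without it one could have $D^+H\geq 0$ everywhere yet a decreasing step. Applying the monotonicity theorem to $H$ on the interval $[a,b]$ gives $H(b)\geq H(a)$, which is the claim.

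The main obstacle is the last step. It is not enough to know that $\Phi'=g$ almost everywhere in the Lebesgue sense, because the upper Dini derivative inequality $D^+H\geq 0$ will fail on precisely the set where $\Phi'$ fails to exist, and this set must be small enough for the monotonicity theorem to apply. The delicate point is therefore to match the definition of ``negligible set'' used in the monotonicity theorem for Dini derivatives with the exceptional set produced by the differentiability theorem for the Perron primitive. Both are captured by the ACG${}_*$ framework of \cite{Sak47}, and the proof is most naturally carried out by directly using the major/minor function definition of the Perron integral rather than the differentiation theorem: given $\varepsilon>0$, choose a major function $M$ of $g$ with $M(b)-M(a)\leq (P)\int_a^b g+\varepsilon$, note that $D^+(F-M)(x)\geq g(x)-D^+M(x)\geq 0$ outside a countable set, and apply the monotonicity theorem to $F-M$; letting $\varepsilon\to 0$ yields \eqref{eq:Generalized_Newton-Leibniz}.
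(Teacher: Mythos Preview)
The paper does not give its own proof of this proposition; it is quoted directly from \cite[Theorem~7.3, pp.~204--205]{Sak47}. Your overall strategy---reduce the statement to monotonicity of an auxiliary function and exploit the major/minor-function description of the Perron integral---is exactly Saks's route, and you correctly diagnose why the first approach via the indefinite integral $\Phi$ is insufficient: the exceptional set where $\Phi'$ fails to exist is only Lebesgue-null, not countable, so the standard Dini-derivative monotonicity theorem does not apply directly.

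There is, however, a sign slip in your last paragraph. For a \emph{major} function $M$ one has $\underline{D}M(x)\geq g(x)$ and hence $D^+M(x)\geq g(x)$; consequently $g(x)-D^+M(x)\leq 0$, and your chain $D^+(F-M)\geq g-D^+M\geq 0$ fails at the final inequality. The argument works if you use a \emph{minor} function $m$ instead (so $\overline{D}m(x)\leq g(x)$, in particular $D^+m(x)\leq g(x)$), giving
\[
D^+(F-m)(x)\;\geq\;D^+F(x)-D^+m(x)\;\geq\;g(x)-g(x)\;=\;0
\]
at every point. Provided $m$ has enough regularity for \eqref{eq:Nice_Discontinuity} to transfer to $F-m$ (in Saks's framework this is arranged), the monotonicity theorem yields $F(b)-F(a)\geq m(b)-m(a)$; taking the supremum over minor functions then gives \eqref{eq:Generalized_Newton-Leibniz}. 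With this correction your sketch coincides with the proof in \cite{Sak47}.
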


Using Theorem~\ref{thm:iUGAS_characterization} and Proposition~\ref{prop:Generalized_Newton-Leibniz_from_Saks_p204},
we can show that the existence of a non-coercive Lyapunov function implies iUGAS without any
further requirements on the flow of the system. If we
additionally assume either the REP or the RFC property, we obtain additional stability properties.

\begin{theorem}
\label{thm:direct_Non-coercive_Lyapunov_theorem} 
Consider a forward complete system $\Sigma=(X,\Dc,\phi)$. 
Assume that $V$ is a non-coercive Lyapunov function for $\Sigma$ with  corresponding functions $\alpha\in\K$ and $\psi_2\in\Kinf$. Then:
\begin{enumerate}[(i)] 
	\item The following statements hold:
\begin{itemize}
	\item[(i-a)] $\Sigmafp$ is iUGS with this $\alpha$ and with $\psi:=\psi_2$.
	\item[(i-b)] $\Sigmafp$ is iUGATT with this $\alpha$.
	\item[(i-c)] $\Sigmafp$ is iUGAS.
\end{itemize}
	\item If additionally $\Sigmafp$ is a REP, then $\Sigmafp$ is UGATT and UAS.
	\item If additionally $\Sigmafp$ is a REP and $\Sigma$ is RFC, then $\Sigmafp$ is UGAS.
\end{enumerate}
\end{theorem}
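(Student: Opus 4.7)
The plan is to prove item (i-a) directly via the generalized Newton--Leibniz formula, and then deduce everything else by invoking the characterizations established earlier in the paper.

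First I would derive the ``fundamental inequality''
\begin{equation*}
V\!\bigl(\phi(a,x,d)\bigr) - V\!\bigl(\phi(b,x,d)\bigr) \;\geq\; \int_a^b \alpha\!\bigl(\|\phi(s,x,d)\|_X\bigr)\, ds
\end{equation*}
for all $0 \leq a < b$, all $x \in X$ and all $d \in \Dc$. To do this I set $F(t) := -V(\phi(t,x,d))$ and $g(t) := \alpha(\|\phi(t,x,d)\|_X)$, and verify the hypotheses of Proposition~\ref{prop:Generalized_Newton-Leibniz_from_Saks_p204}. The jump-down condition \eqref{eq:Nice_Discontinuity} for $F$ is just \eqref{eq:V_estimate_along_trajectory_final} multiplied by $-1$ (since $\limsup(-h) = -\liminf(h)$). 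The Dini estimate $D^+F(t) \geq g(t)$ follows from the dissipation inequality \eqref{DissipationIneq_UGAS_With_Disturbances}: using the cocycle property and shift-invariance of $\Dc$, the right-lower Dini derivative of $t\mapsto V(\phi(t,x,d))$ at $t$ equals $\dot V_{d(t+\cdot)}(\phi(t,x,d)) \leq -\alpha(\|\phi(t,x,d)\|_X)$, and negation swaps $D_+$ and $D^+$. Perron-integrability of $g$ on any bounded interval follows since $t\mapsto \|\phi(t,x,d)\|_X$ is continuous by axiom~\ref{axiom:Continuity} and $\alpha$ is continuous.

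Using the fundamental inequality with $a=0$, together with $V \geq 0$ and $V(x) \leq \psi_2(\|x\|_X)$, and letting $b \to \infty$ by monotone convergence, yields
\begin{equation*}
\int_0^\infty \alpha\!\bigl(\|\phi(s,x,d)\|_X\bigr)\, ds \;\leq\; V(x) \;\leq\; \psi_2(\|x\|_X)
\end{equation*}
for every $x\in X$ and $d\in\Dc$. This is precisely iUGS with the stated $\alpha$ and $\psi:=\psi_2$, proving (i-a). Items (i-c) and (i-b) then follow immediately from Theorem~\ref{thm:iUGAS_characterization}: the implication (ii)$\Rightarrow$(i) gives iUGS $\Rightarrow$ iUGAS, and iUGAS trivially implies iUGATT.

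For (ii), having established iUGATT in part (i), the assumption that $\Sigmafp$ is a REP together with Proposition~\ref{prop:Influence_of_REP_and_RFC}(ii) directly yields UGATT and UAS. For (iii), since we have iUGATT, REP, and RFC, we invoke the equivalence (iii)$\Rightarrow$(i) of Theorem~\ref{thm:UGAS_characterization} to conclude UGAS.

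The only delicate point is the rigorous setup for applying Proposition~\ref{prop:Generalized_Newton-Leibniz_from_Saks_p204}: one must extend $F$ to all of $\R$ (e.g., constantly equal to $F(0)$ on $(-\infty,0)$), verify that \eqref{eq:Nice_Discontinuity} still holds at $t=0$ (this is exactly why the right-hand inequality in \eqref{eq:V_estimate_along_trajectory_final} is also imposed at $s=0$ in the definition of a non-coercive Lyapunov function), and confirm that the Dini derivative estimate survives the transition from the ``infinitesimal'' definition \eqref{UGAS_wrt_D_LyapAbleitung} at a fixed point to a pointwise estimate along the whole trajectory -- which is where the cocycle property~\ref{axiom:Cocycle} and the shift invariance of $\Dc$ enter. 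Once these verifications are made, the rest is a direct appeal to earlier theorems.
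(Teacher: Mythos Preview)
Your proposal is essentially the paper's own proof: the same auxiliary function $F(t)=-V(\phi(t,x,d))$, the same appeal to Proposition~\ref{prop:Generalized_Newton-Leibniz_from_Saks_p204} for (i-a), and the same references to Proposition~\ref{prop:Influence_of_REP_and_RFC}(ii) and Theorem~\ref{thm:UGAS_characterization} for (ii) and (iii).

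There is one small slip. Item (i-b) asserts iUGATT \emph{with the specific} $\alpha$ from the dissipation inequality, not merely with some $\alpha\in\K$. Your route iUGS $\Rightarrow$ iUGAS $\Rightarrow$ iUGATT via the implication (ii)$\Rightarrow$(i) of Theorem~\ref{thm:iUGAS_characterization} does not track this: the proof of that chain passes through (v)$\Rightarrow$(vi), where $\alpha$ is replaced by a possibly smaller function in $\K\setminus\Kinf$, so the iUGAS (and hence iUGATT) you reach may carry a different weight. The paper avoids this by going directly (ii)$\Rightarrow$(iii)$\Rightarrow$(iv): iUGS with $\alpha$ trivially gives iULS with the same $\alpha$ plus UGWA (Proposition~\ref{prop:From_ncLF_to_UGWA}), and Theorem~\ref{thm:iUGAS_characterization} explicitly states that the $\alpha$ in item~(iv) can be taken equal to that in item~(iii). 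Adjusting your argument to cite this shorter path fixes the issue; everything else is fine.
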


\begin{proof} 
\textbf{(i-a).} 
Since $V$ is a non-coercive Lyapunov function (with a corresponding $\alpha\in\K$), we have the decay estimate \eqref{DissipationIneq_UGAS_With_Disturbances}.
Pick any $x\in X$ and any $d\in\Dc$ and define $\xi :\R_+ \to\R$ via $\xi(t):=-V(\phi(t,x,d))$.

Along the trajectory $\phi$ of $\Sigma$ we have the inequality
\begin{equation}
\dot V_{d(t+\cdot)}(\phi(t,x,d)) \leq -\alpha(\|\phi(t,x,d)\|_X), \quad
\forall t\geq 0.
\label{eq:JustEq1}
\end{equation}

Due to the cocycle property we have
\begin{align*}
\dot V_{d(t+\cdot)}(&\phi(t,x,d)) \\
=& \mathop{\underline{\lim}}  \limits_{h \rightarrow +0} \frac{1}{h}\big(V(h,\phi(t,x,d),d(t+\cdot)) - V(\phi(t,x,d))\big)\\
=& \mathop{\underline{\lim}}  \limits_{h \rightarrow +0} \frac{1}{h}\big(V(\phi(t+h,x,d)) - V(\phi(t,x,d))\big)\\
=& \mathop{\underline{\lim}}  \limits_{h \rightarrow +0} \frac{1}{h}\big(-\xi(t+h) + \xi(t)\big) 
=D_+(-\xi(t))
=-D^+ \xi(t).
\end{align*}
With this new notation, equation \eqref{eq:JustEq1} can be rewritten  as 
\begin{eqnarray}
D^+ \xi(t) \geq \alpha(\|\phi(t,x,d)\|_X), \quad \forall t\geq 0.
\label{eq:JustEq2}
\end{eqnarray}
In view of \eqref{eq:V_estimate_along_trajectory_final} and since 
$\mathop{\underline{\lim}} \limits_{h \rightarrow +0}-\xi(t+h)=-\mathop{\overline{\lim}} \limits_{h \rightarrow +0}\xi(t+h)$,
we see that the inequality
\begin{eqnarray}
\mathop{\overline{\lim}} \limits_{h \rightarrow +0} \xi(t-h) \leq \xi(t) \leq 
\mathop{\overline{\lim}} \limits_{h \rightarrow +0} \xi(t+h)
\label{eq:LF_Nice_Discontinuity}
\end{eqnarray}
is satisfied for all $t>0$, and the right inequality is satisfied for $t=0$ as well.

Now we can apply
Proposition~\ref{prop:Generalized_Newton-Leibniz_from_Saks_p204} to the
above inequality. Since $t\mapsto \phi(t,x,d)$ is continuous due to the
continuity axiom $\Sigma$\ref{axiom:Continuity}, the function $g: t\mapsto
\alpha(\|\phi(t,x,d)\|_X)$ is continuous as well, and thus it is Riemann
integrable on any compact interval in $\R_+$.  As $g$ is a positive
function, the Riemann and the Perron integral
coincide (see \cite[p. 203]{Sak47}). Thus in our case the Perron integral in the
formula \eqref{eq:Generalized_Newton-Leibniz} is merely a Riemann
integral.

%

Applying Proposition~\ref{prop:Generalized_Newton-Leibniz_from_Saks_p204}, we obtain:
\begin{eqnarray}
\xi(t) - \xi(0) \geq \int_0^t \alpha(\|\phi(s,x,d)\|_X) ds, \quad \forall t\geq 0.
\label{eq:JustEq3}
\end{eqnarray}
Since $\xi(0) = - V(\phi(0,x,d)) = -V(x)$ due to the identity axiom of
$\Sigma$, the above inequality immediately implies that
\begin{equation*}
V(\phi(t,x,d)) - V(x) \leq -\int_0^t \alpha(\|\phi(s,x,d)\|_X)ds,
\end{equation*}
which in turn shows that for all $t\geq 0$ we have
\begin{eqnarray}
\int_0^t \alpha(\|\phi(s,x,d)\|_X)ds  
\leq
V(x)\leq \psi_2(\|x\|_X).
\label{eq:Vintbound}
\end{eqnarray}
Taking the limit $t\to\infty$, we see that $\Sigmafp$ is iUGS.

\textbf{(i-b).} By Proposition~\ref{prop:From_ncLF_to_UGWA} and item (i) we see that $\Sigmafp$ is UGWA. Checking the proof of the Theorem~\ref{thm:iUGAS_characterization} (implication (iii) $\Rightarrow$ (iv)) we see that $\Sigmafp$ is iUGATT with the same $\alpha$.

\textbf{(i-c).} Follows from items (i), (ii) and Theorem~\ref{thm:iUGAS_characterization}.


\textbf{(ii).} By the item (i-b) of this theorem, $\Sigmafp$ is iUGATT. Now Corollary~\ref{prop:NewProp_and_REP_imply_UGATT} implies that $\Sigmafp$ is UGATT and UAS.

\textbf{(iii).} By the item (i-b) of this theorem, $\Sigmafp$ is iUGATT. The rest follows from Theorem~\ref{thm:UGAS_characterization}.
\end{proof}

\begin{remark}
\label{rem:Discontinuity_of_V} 
Condition \eqref{eq:V_estimate_along_trajectory_final} means that for each $x\in X$ and $d\in \Dc$ the map 
$t\mapsto V(\phi(t,x,d))$ is either continuous at $t^*$ or the function
jumps down at $t^*$, for arbitrary $t^* \in \R_+$. 
\end{remark}

\begin{remark}
\label{rem:Relations_to_classic_results} 
The crucial difference of Theorem~\ref{thm:direct_Non-coercive_Lyapunov_theorem} from classic Lyapunov theorems is that  we do not assume the coercivity of a Lyapunov function. This makes it impossible to use any kind of a comparison principle to derive the desired UGAS stability property. 

On the other hand, in contrast to the non-coercive direct Lyapunov theorem
shown in \cite{MiW17a} we assume for item (i) of
Theorem~\ref{thm:direct_Non-coercive_Lyapunov_theorem} neither robustness
of the trivial equilibrium, nor the RFC property of the system $\Sigma$
(however, we still assume in advance the forward completeness of system
$\Sigma$). Even under such mild assumptions (and with very mild regularity
assumptions on $V$) we are able to infer the iUGAS property.
%
We note that it is also possible to show a practical UGAS property if in
addition to the existence of $V$ we assume RFC.
Item (ii) of Theorem~\ref{thm:direct_Non-coercive_Lyapunov_theorem} is a variation of \cite[Corollary 3.10]{Mir17a} and is given here for completeness.
Item (iii) of Theorem~\ref{thm:direct_Non-coercive_Lyapunov_theorem} is
slightly stronger than \cite[Theorem 4.5]{MiW17a}, where a more direct proof of this result was given.
\end{remark}

\subsection{Converse non-coercive Lyapunov theorem}
\label{sec:Converse_non-coercive_Lyapunov_theorem}


We proceed to the converse Lyapunov theorem.

\begin{theorem}
\label{thm:converse_Non-coercive_Lyapunov_theorem} 
Consider a forward complete system $\Sigma=(X,\Dc,\phi)$ and let $0$ be an equilibrium of $\Sigma$. Assume that $\Sigma$ is iUGS with $\alpha\in\K$ and $\psi\in\Kinf$.
Then for any $\rho\in\K\backslash\Kinf$ so that $\rho(r)\leq \alpha(r)$ for all $r\in\R_+$ it holds that 
\begin{eqnarray}
V(x):= \sup_{d\in\Dc} \int_0^\infty \rho(\|\phi(s,x,d)\|_X)ds
\label{eq:ncLF_discontinuous_construction}
\end{eqnarray}
is a (possibly non-coercive) Lyapunov function for $\Sigma$, satisfying \eqref{eq:1} with $\psi_2$ as above and so that
\eqref{eq:V_estimate_along_trajectory_final} holds.
\end{theorem}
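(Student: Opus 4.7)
The proof splits into a bookkeeping step (upper bound and positivity of $V$) and the substantive step (the decay estimate along trajectories, from which both the Dini derivative bound and \eqref{eq:V_estimate_along_trajectory_final} will follow).

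First I would verify the basic bounds on $V$. Since $\rho \leq \alpha$ pointwise and $\Sigma$ is iUGS with constants $\alpha,\psi$, for every $x\in X$, every $d\in\Dc$ we have $\int_0^\infty \rho(\|\phi(s,x,d)\|_X)\,ds \leq \int_0^\infty \alpha(\|\phi(s,x,d)\|_X)\,ds \leq \psi(\|x\|_X)$, so that $V(x) \leq \psi(\|x\|_X)$; this gives the upper bound \eqref{eq:1} with $\psi_2:=\psi$, and in particular $V(0)=0$ since $0$ is an equilibrium. For positivity at $x\neq 0$, I would pick any $d\in\Dc$ and use continuity of $s\mapsto\phi(s,x,d)$ together with the identity axiom to find a $\delta>0$ with $\|\phi(s,x,d)\|_X \geq \|x\|_X/2$ for $s\in[0,\delta]$, whence $V(x) \geq \delta\,\rho(\|x\|_X/2)>0$.

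The heart of the argument is the inequality
\begin{equation}
V(\phi(t,x,d)) + \int_0^t \rho(\|\phi(s,x,d)\|_X)\,ds \leq V(x), \qquad t\geq 0,\ x\in X,\ d\in\Dc. \label{eq:planDecay}
\end{equation}
To obtain it, fix $t\geq 0$, $x\in X$, $d\in\Dc$ and an arbitrary $\tilde d\in\Dc$. Define the concatenation $d^*(\tau):=d(\tau)$ for $\tau\in[0,t]$ and $d^*(\tau):=\tilde d(\tau - t)$ otherwise; by the concatenation axiom $d^*\in\Dc$. Causality gives $\phi(s,x,d^*)=\phi(s,x,d)$ for $s\in[0,t]$, while the cocycle axiom combined with shift invariance gives $\phi(s,x,d^*)=\phi(s-t,\phi(t,x,d),\tilde d)$ for $s\geq t$. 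Splitting the integral in the definition of $V(x)$ at $t$, substituting these identities, and taking the supremum over $\tilde d\in\Dc$ yields \eqref{eq:planDecay}.

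The remaining claims are now routine. The left inequality in \eqref{eq:V_estimate_along_trajectory_final} is immediate from \eqref{eq:planDecay} applied to $(\phi(s-h,y,d),d(s-h+\cdot))$ (valid for $s>0$ and sufficiently small $h$), which yields $V(\phi(s,y,d))\leq V(\phi(s-h,y,d))$ for each small $h$; the right inequality (including $s=0$) follows similarly from $V(\phi(s+h,y,d))\leq V(\phi(s,y,d))$. For the Dini derivative along $\Sigma$ at $x$, divide \eqref{eq:planDecay} by $t>0$ and take the liminf as $t\to +0$; since $s\mapsto \rho(\|\phi(s,x,d)\|_X)$ is continuous at $s=0$ with value $\rho(\|x\|_X)$, the mean value of the integrand converges to $\rho(\|x\|_X)$, giving $\dot V_d(x)\leq -\rho(\|x\|_X)$ with $\rho\in\K$, which is the required dissipation inequality \eqref{DissipationIneq_UGAS_With_Disturbances}.

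I expect the main obstacle to be the careful use of the axioms (shift invariance, concatenation, causality, cocycle) to justify \eqref{eq:planDecay}; once that is in place everything else is straightforward. The reason we must replace the original $\alpha$ by $\rho\in\K\setminus\Kinf$ is not needed for the derivation itself but is what guarantees finiteness of the supremum in \eqref{eq:ncLF_discontinuous_construction} uniformly for the decay bound (via the iUGS bound with $\alpha$), without imposing any additional regularity on $\Sigma$.
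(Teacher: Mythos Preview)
Your proposal is correct and follows essentially the same route as the paper: both arguments hinge on the Bellman-type inequality \eqref{eq:planDecay} (the paper calls it an instance of Bellman's principle), derived via concatenation, causality and the cocycle property, from which the dissipation estimate and the semicontinuity conditions \eqref{eq:V_estimate_along_trajectory_final} follow exactly as you describe. Your closing remark about the role of $\rho\in\K\setminus\Kinf$ is slightly imprecise---finiteness of $V$ comes from $\rho\leq\alpha$ together with iUGS, not from boundedness of $\rho$---but this does not affect the proof itself.
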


Before we proceed to the proof of
Theorem~\ref{thm:converse_Non-coercive_Lyapunov_theorem}, we would like to
stress, that in contrast to most of the converse Lyapunov theorems for
infinite-dimensional nonlinear systems (as \cite{Mir17a},
\cite[Section 3.4]{KaJ11b}), we do not impose any additional regularity
assumptions on the flow of the system, in particular, we assume neither
continuous dependence on data, nor robustness of the equilibrium point,
nor the RFC property. 
Theorems~\ref{thm:direct_Non-coercive_Lyapunov_theorem} and~\ref{thm:converse_Non-coercive_Lyapunov_theorem} together show that noncoercive Lyapunov functions are a natural tool for analysis of integral stability properties. 

In the proof we follow ideas from \cite[Section 3.4]{KaJ11b},
\cite[Theorem 5.6]{Mir17a}.

\begin{proof}(of Theorem~\ref{thm:converse_Non-coercive_Lyapunov_theorem}).

Pick any $\rho\in\K\backslash\Kinf$ so that $\rho(r)\leq \alpha(r)$ for all $r\in\R_+$.
\textbf{(i).} Since $\Sigmafp$ is iUGS with $\alpha,\psi$, it follows that
\begin{eqnarray*}
0\leq V(x) \leq \sup_{d\in\Dc} \int_0^\infty \alpha(\|\phi(s,x,d)\|_X)ds\leq \psi_2(\|x\|_X).
\end{eqnarray*}

Since $0$ is an equilibrium of $\Sigma$, $\phi(s,0,d)\equiv 0$ for all
$s\geq 0$ and all $d\in\Dc$, which immediately implies that $V(0)=0$. If
$x \neq0$, then by continuity of solutions, we have for every $d
\in D$ a $T>0$ such that $\phi(t,x,d) \neq0$ for all $t\in [0,T]$. This
implies that $V(x) > 0$ and in summary \eqref{eq:1} is satisfied.


\textbf{(ii).} To compute the Dini derivative of $V$, fix $x \in X$ and $v \in \Dc$. In view of the cocycle property we have for any $h>0$:
\begin{eqnarray*}
V\big(\phi(h,x,v)\big) &=& \sup_{d \in \Dc} \int_0^{\infty} \rho(\|\phi(t,\phi(h,x,v),d)\|_X) dt\\
&=& \sup_{d \in \Dc} \int_0^{\infty} \rho(\|\phi(t+h,x,\tilde{d})\|_X) dt \\
&=& \sup_{d \in \Dc} \int_h^{\infty} \rho(\|\phi(t,x,\tilde{d})\|_X) dt,
\end{eqnarray*}
where the disturbance function $\tilde{d}$ is defined as 
\[
\tilde{d}(t) := 
\begin{cases}
v(t), & \text{ if } t \in [0,h] \\ 
d(t-h)  & \text{ otherwise}.
\end{cases}
\]
Note that $\tilde{d}\in \Dc$ due to the axiom of concatenation.
Since $\tilde{d}(t) = v(t)$ for $t\in[0,h]$, it holds that
\begin{align*}
\int_0^h &\rho(\|\phi(t,x,v)\|_X) dt + V\big(\phi(h,x,v)\big) \\
&= \sup_{d \in \Dc} \Big(\int_0^h \rho(\|\phi(t,x,v)\|_X) dt +  \int_h^{\infty} \rho(\|\phi(t,x,\tilde{d})\|_X) dt\Big)\\
&= \sup_{d \in \Dc} \Big(\int_0^h \rho(\|\phi(t,x,\tilde{d})\|_X) dt +  \int_h^{\infty} \rho(\|\phi(t,x,\tilde{d})\|_X) dt\Big)\\
&= \sup_{d \in \Dc} \int_0^{\infty} \rho(\|\phi(t,x,\tilde{d})\|_X) dt.
\end{align*}
Since the supremum cannot decrease, if we allow a larger class of
disturbances, it may be seen that for all $h>0$ we have 
\begin{align}
\label{eq:Bellman-principle}
\int_0^h \rho(\|\phi(t,x&,v)\|_X) dt + V\big(\phi(h,x,v)\big) \nonumber\\
&\leq \sup_{d \in \Dc} \int_0^{\infty} \rho(\|\phi(t,x,d)\|_X) dt
= V(x).
\end{align}
The obtained inequality may be interpreted as an instance of Bellman's principle.
To compute the Dini derivative of $V$ along trajectories we note that
rearranging the inequality \eqref{eq:Bellman-principle} we obtain for all
$h>0$ that
\begin{equation}
\frac{1}{h}\big( V\big(\phi(h,x,v)\big) - V(x) \big) \leq -\frac{1}{h}\int_0^h \rho(\|\phi(t,x,v)\|_X) dt.
\label{eq:Auxiliary_Estimate_LF}
\end{equation}
As the map $t\mapsto \rho(\|\phi(t,x,v)\|_X)$ is continuous by the axiom
of continuity, it follows that 

 \begin{equation*}
 \mathop{\lim} \limits_{h \rightarrow +0} \frac{1}{h}\int_0^{h} \rho(\|\phi(t,x,v)\|_X) dt
 =  \rho(\|x\|_X),
 \end{equation*}
 and due to \eqref{eq:Auxiliary_Estimate_LF} we obtain
$\dot{V}_v(x) \leq -  \rho(\|x\|_X)$ and so
\eqref{DissipationIneq_UGAS_With_Disturbances} holds for the given $\rho \in {\cal K}$.
%

 \textbf{(iii).} It remains to show that
 \eqref{eq:V_estimate_along_trajectory_final} holds. The right hand side
 in this inequality holds (even in $s=0$) by a direct application of
 \eqref{eq:Bellman-principle} to the case $x=\phi(s,y,d)$ and arbitrary
 $s\geq0,  y\in X, d\in{\cal D}$, because the integral on the left hand
 side of \eqref{eq:Bellman-principle} is always nonnegative.


To show the left hand side, fix  $d\in\Dc$,  $s>0$,  $y\in X$ and $h\in (0,s)$.  Substitute $x:=\phi(s-h,y,d)$ and $v:=d(s-h+\cdot)$ into Bellman's inequality \eqref{eq:Bellman-principle}
to obtain
 \begin{align*}
 \int_0^h \rho\big(\|\phi(t,\phi(s-h,y,d),d(s-h+\cdot))\|_X\big) dt 
&+ V\big(\phi(h,\phi(s-h,y,d),d(s-h+\cdot))\big) \\
&\leq V(\phi(s-h,y,d)).
 \end{align*}
 Using again the cocycle property and rearranging the terms of the above
 inequality, we conclude for all $h\in (0,s)$ that
 \begin{eqnarray*}
  V(\phi(s-h,y,d)) \geq  V\big(\phi(s,y,d)\big) + \int_0^h \rho(\|\phi(t+s-h,y,d)\|_X) dt.
 \end{eqnarray*}
 Arguing as above, we obtain that:
 \begin{eqnarray}
 \label{eq:V_estimate_along_trajectory_2}
 \mathop{\underline{\lim}} \limits_{h \rightarrow +0} V\big(\phi(s-h,y,d)\big) \geq V\big(\phi(s,y,d)\big).
 \end{eqnarray}

 This shows \eqref{eq:V_estimate_along_trajectory_final} and the proof is complete.
\end{proof}

\section{Conclusions}

In order to understand the implications of the existence of non-coercive
Lyapunov functions we have introduced several integral notions of
stability, which do not measure the pointwise distance to the equilibrium but
rather a weighted average along trajectories. It has been shown that in a
quite general setting noncoercive Lyapunov functions characterize these
integral notions. Also the relation to standard stability notions are
discussed, see also Figure~\ref{fig:Stability}. 
It will be of interest to investigate how the results obtained
here carry over to questions of input-to-state stability (ISS). 
Some results in this direction have been recently developed in \cite{MiW18b,JMP18}.

We point out that in \cite{Mir17a} the relation between uniform weak attractivity and closely related concepts of weak
attractivity and recurrence are discussed for systems without inputs.

 \begin{figure*}[tbh]
 \centering
 \begin{tikzpicture}[>=implies,thick]


 \node (UGAS) at (3,8) {UGAS};
 \node (iUGATT_REP_RFC) at (7,8) {iUGATT $\wedge$ 0-REP $\wedge$ RFC};
 \node (UGATT_REP_RFC) at (12.5,8) {UGATT $\wedge$ 0-REP $\wedge$ RFC};

 \draw [rounded corners] (2.5,8.8) rectangle (14.9,7.6);
 \node (Thm_UGAS_Charact) at (8.6,8.5) {\footnotesize Theorem~\ref{thm:UGAS_characterization}};

 \node (ncLF) at (0.4,6) {$\exists$ a noncoercive LF};
 \node (iUGAS) at (3.4,4) {iUGAS};
 \node (iUGATT_iREP) at (6.3,4) {iUGATT $\wedge$ iREP};
 \node (UGWA_iULS) at (-1,4) {UGWA $\wedge$ iULS};
 \node (iUGS) at (1.6,4) {iUGS};
 \node (iULS) at (-2,1.8) {iULS};
 \node (iUGATT) at (7,1.8) {iUGATT};
 \node (UGWA_Ult_iULS) at (4,1.8) {UGWA $\wedge$ Ult-iULS};
 \node (iREP_Ult_iULS) at (0.5,1.8) {iREP $\wedge$ Ult-iULS};

 \draw [rounded corners] (5.2,6.8) rectangle (12.2,5.6);

 \node (iUGATT_REP) at (7,6) {iUGATT $\wedge$ 0-REP};
 \node (iUGATT_ULS) at (10.5,6) {iUGATT $\wedge$ ULS};
 \node (UGATT_REP) at (14,4) {UGATT $\wedge$ 0-REP};
 \node (UGATT_ULS) at (10.5,4) {UGATT $\wedge$ ULS};
 \node (UGATT) at (10.5,1.8) {UGATT};
 \node (UGWA) at (7,0.5) {UGWA};

 \draw [rounded corners] (-2.5,4.8) rectangle (7.8,3.5);
 \node (Thm5) at (3,4.5) {\footnotesize Theorem~\ref{thm:iUGAS_characterization}};

 \node (Lemma_REP_iREP) at (5.5,5.25) {\footnotesize Lemma~\ref{lem:REP_implies_iREP} };

 \node (Prop_iUGATT_REP) at (8.6,6.5) {\footnotesize Proposition~\ref{prop:NewProp_and_REP_imply_UGATT}};
 \node (Prop_iUGATT_REP_2) at (11.8,5.2) {\footnotesize Proposition~\ref{prop:NewProp_and_REP_imply_UGATT}};

 \node (Prop_NC_LF) at (-1.25,5.35) {\footnotesize Theorem~\ref{thm:direct_Non-coercive_Lyapunov_theorem}}; 
 \node (Prop_NC_LF_Conv) at (2.2,5.35) {\footnotesize Theorem~\ref{thm:converse_Non-coercive_Lyapunov_theorem}};

 \node (REP) at (8.6,0.5) {REP};
 \draw[->,double equal sign distance] (7,5.5) to (7,4.9);
 \draw[->,double equal sign distance] (8.6,5.5) to (REP);

 \draw[->,double equal sign distance] (9.5,3.4) to (REP);

 \draw[<->,double equal sign distance] (UGAS) to (iUGATT_REP_RFC);
 \draw[<->,double equal sign distance] (iUGATT_REP_RFC) to (UGATT_REP_RFC);

 \draw[->,double equal sign distance] (-0.2,5.7) to (-0.2,4.9);
 \draw[<-,double equal sign distance] (1.2,5.7) to (1.2,4.9);

 \draw[<->,double equal sign distance] (UGWA_iULS) to (iUGS);
 \draw[<->,double equal sign distance] (iUGS) to (iUGAS) ;
 \draw[<->,double equal sign distance] (iUGAS)  to (iUGATT_iREP);

 \draw[->,double equal sign distance] (8.6,7.5) to (8.6,6.9);
 \draw[<->,double equal sign distance] (iUGATT_REP) to (iUGATT_ULS);

 \draw [rounded corners] (9.1,4.8) rectangle (15.6,3.5);

 \draw[->,double equal sign distance] (10.4,5.5) to (10.4,4.9);
 \draw[->,double equal sign distance] (12.7,3.4) to (12.7,2.8);
 \draw[<->,double equal sign distance] (UGATT_ULS) to (UGATT_REP);

 \draw[<->,double equal sign distance] (iUGATT) to (UGWA_Ult_iULS);
 \draw[->,double equal sign distance] (7,1.3) to (7,0.7);
 \draw [rounded corners] (2.35,1.4) rectangle (7.8, 2.6);

 \node (UGWA_UltULS) at (14,1.8) {UGWA $\wedge$ UltULS};
 \node (UltULS) at (12.7,0.5) {UltULS};

 \draw[->,double equal sign distance] (5.1,3.4) to (5.1,2.7);
 \draw[->,double equal sign distance] (UGATT) to (UGWA);
 \draw[<->,double equal sign distance] (UGATT) to (UGWA_UltULS);
 \node (Prop_iUGATT_Crit) at (5.1,2.25) {\footnotesize Proposition~\ref{prop:iUGATT_equiv_UGWA_Ult_iULS}};
 \draw [rounded corners] (9.7,1.4) rectangle (15.6, 2.6);

 \node (Prop_UGATT_Crit) at (12.7,2.25) {\footnotesize Proposition~\ref{prop:UGATT_equiv_UGWA_Ult_ULS}};

 \draw[->,double equal sign distance] (12.7,1.3) to (UltULS);

 \node (iREP) at (-0.25,0.5) {iREP};
 \node (Ult_iULS) at (2.2,0.5) {Ult-iULS};

 \draw[->,double equal sign distance] (-0.25,3.4) to (-0.25,2.7);
 \draw[->,double equal sign distance] (4,1.3) to (Ult_iULS);

 \node (Prop_iUGATT_Crit) at (-0.2,2.25) {\footnotesize Lemma~\ref{lem:iULS_criterion} };

 \draw[<->,double equal sign distance] (iULS) to (iREP_Ult_iULS);
 \draw[->,double equal sign distance] (0,1.3) to (Ult_iULS);
 \draw[->,double equal sign distance] (-0.25,1.3) to (iREP);
 \draw [rounded corners] (-2.5,1.4) rectangle (1.9, 2.6);

 \end{tikzpicture}

 \caption{Relations between stability notions}%
 \label{fig:Stability}%
 \end{figure*}
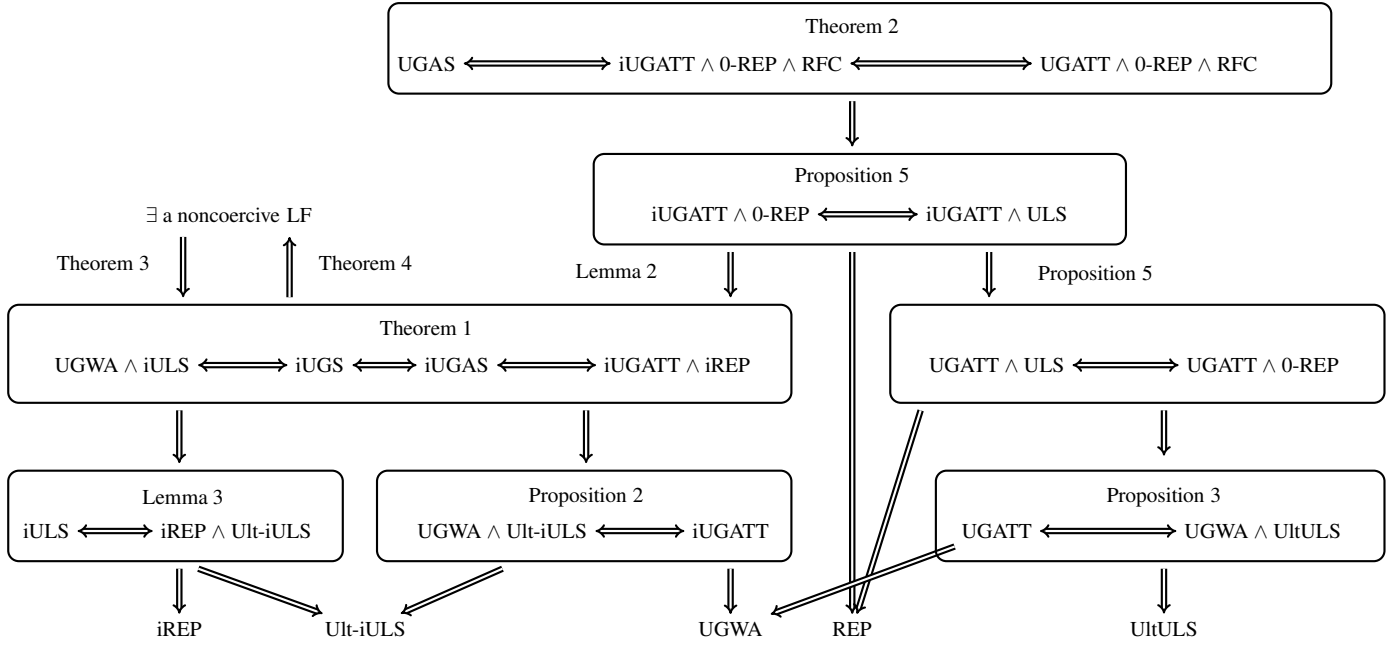

\appendix
\renewcommand\thesection{\Alph{section}}
  \section{Appendix}
\label{sec:appendix}
\renewcommand{\thetheorem}{\thesection.\arabic{theorem}}%

  In this appendix, we show a result providing conditions for the
  existence of a $\KL$-bound for a function of two arguments that has been
  used in our proofs. Although the result may not be surprising for the
  expert, we have not found an explicit reference and so prefer to present
  the construction here.

  We start with an auxiliary statement: 
  \begin{proposition}
  \label{prop:Partition_of_Reals}
  For any $z\in C(\R_+,\R)$ and for any $\eps>0$ there is a 
sequence $\{R_k\}_{k\in\Z} \subset (0,+\infty)$ satisfying the following properties:
  \begin{itemize}
      \item[(i)] $R_k \to 0$ as $k\to -\infty$.
      \item[(ii)] $R_k\to+\infty$ as $k\to+\infty$.
      \item[(iii)] $R_k<R_{k+1}$ for all $k\in\Z$.
      \item[(iv)] $\max_{s\in[R_k,R_{k+1}]}z(s) - \min_{s\in[R_k,R_{k+1}]}z(s) <\eps$, for all $k\in\Z$.
  \end{itemize}
  \end{proposition}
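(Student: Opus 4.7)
The plan is to exploit uniform continuity of $z$ on each of the compact intervals $[2^n, 2^{n+1}]$, $n \in \Z$, subdivide each finely enough that the oscillation of $z$ on every subinterval is strictly smaller than $\eps$, and then concatenate the resulting partition points into a single bi-infinite sequence.

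More precisely, for each fixed $n \in \Z$, continuity of $z$ on the compact interval $[2^n, 2^{n+1}]$ yields a modulus $\delta_n > 0$ such that $|z(s) - z(t)| \leq \eps/2$ whenever $s, t \in [2^n, 2^{n+1}]$ and $|s - t| < \delta_n$. I would then subdivide $[2^n, 2^{n+1}]$ uniformly into $m_n$ pieces of length less than $\delta_n$, producing points $2^n = t_0^{(n)} < t_1^{(n)} < \dots < t_{m_n}^{(n)} = 2^{n+1}$; on each subinterval $[t_j^{(n)}, t_{j+1}^{(n)}]$ the attained maximum and minimum of $z$ then satisfy $\max - \min \leq \eps/2 < \eps$. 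Because $t_{m_n}^{(n)} = 2^{n+1} = t_0^{(n+1)}$, the union
$$ \bigcup_{n \in \Z} \bigl\{ t_j^{(n)} : 0 \leq j < m_n \bigr\} \subset (0, +\infty) $$
is a strictly increasing set of points accumulating only at $0$ and $+\infty$. Listing its elements in increasing order and relabelling them as $\{R_k\}_{k \in \Z}$, with the normalization $R_0 := 1$, produces the desired sequence.

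Properties (i)--(iii) are immediate from the construction, since for any $N \in \N$ there are indices $k$ with $R_k \geq 2^N$ and with $R_k \leq 2^{-N}$, while each dyadic block $[2^n,2^{n+1}]$ contributes only finitely many points. For property (iv), each consecutive pair $(R_k, R_{k+1})$ coincides with some $(t_j^{(n)}, t_{j+1}^{(n)})$, and the oscillation of $z$ on such a piece has already been controlled by $\eps/2$. The main point requiring care, rather than any real obstacle, is organizational: one must verify that the ordered union is order-isomorphic to $\Z$, which follows from $m_n \geq 1$ for every $n$ together with the observation that only finitely many $t_j^{(n)}$ fall into any compact subset of $(0,\infty)$. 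Otherwise the construction is a standard compactness–uniform-continuity argument.
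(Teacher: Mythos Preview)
Your proof is correct and follows essentially the same approach as the paper: start from a coarse bi-infinite partition of $(0,\infty)$ satisfying (i)--(iii), invoke uniform continuity of $z$ on each compact piece to refine it into finitely many subintervals with oscillation below $\eps$, and concatenate. The only cosmetic difference is that you take the concrete dyadic sequence $\{2^n\}_{n\in\Z}$ as the initial partition, whereas the paper works with an abstract sequence $\{S_k\}_{k\in\Z}$ satisfying (i)--(iii).
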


We stress that by conditions (i), (ii), (iii), zero is the only finite accumulation point of $\{R_k\}_{k\in\Z}$.

  \begin{proof}
  It is easy to see that a sequence satisfying the properties (i)--(iii) always exists 
  (and can be chosen independently on $z$). Pick any such sequence and denote it $\{S_k\}_{k\in\Z}$. 
  Since $z\in C(\R_+,\R)$, by Cantor's theorem $z$ is uniformly continuous on $[S_k,S_{k+1}]$ for any $k\in\Z$.
  Thus, there exists a partition of $[S_k,S_{k+1}]$ into finitely many
  subintervals with boundary points $S_k=S_{k1} < S_{k2} \ldots < S_{km(k)}=S_{k+1}$ so that $\max_{s\in[S_{ki},S_{ki+1}]}z(s) - \min_{s\in[S_{ki},S_{ki+1}]}z(s) <\eps$, for each $i=1,\ldots,m(k)-1$.

  Now define the desired sequence $\{R_k\}_{k\in\Z}$ by inserting 
  considering the ordered sequence $\{S_{kj}\}_{k\in\Z,
    j=1,\ldots,mk}$. Clearly, $\{R_k\}_{k\in\Z}$ satisfies (i)-(iv).
  \end{proof}

  The following estimation result is useful in our derivations.
  \begin{proposition}
  \label{prop:Upper_estimate_as_KL_function}
  Let $\psi:\R_+ \times \R_+ \to \R_+$ be any function which is
  nondecreasing and continuous at 0 in the first argument, nonincreasing
  in the second argument and so that $\lim_{t\to\infty}\psi(r,t)= 0$ for
  any $r\geq 0$. Let also $\psi(0,t)=0$ for any $t\geq 0$. Then there
  exists a $\beta\in\KL$ such that 
  \[
  \psi(r,t)\leq \beta(r,t) \quad \forall r,t\geq \R_+.
  \]
  \end{proposition}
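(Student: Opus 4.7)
The plan is to construct $\beta$ by first producing a continuous, coordinate-wise monotone dominator $\bar\Omega$ of $\psi$ via an integral smoothing, and then adding a strictly monotone correction term. First, since $r \mapsto \psi(r, 0)$ is nondecreasing, vanishes at $0$, and is continuous at $0$, a standard majorization yields $\alpha \in \Kinf$ with $\alpha(r) \geq \psi(r, 0)$ for all $r \geq 0$; combined with the nonincreasingness of $\psi$ in $t$, this gives $\alpha(r) \geq \psi(r, t)$ for all $(r, t)$.

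Next, I would define the smoothed dominator
\begin{eqnarray*}
\bar\Omega(r, t) \;:=\; \int_0^1 \int_0^1 \psi\!\left(r(1+u),\; t - v\min(t,1)\right) du\, dv.
\end{eqnarray*}
Since $r \mapsto r(1+u)$ and $t \mapsto t - v\min(t,1)$ are continuous and nondecreasing (the latter equals $t(1-v)$ for $t \leq 1$ and $t-v$ for $t \geq 1$), while $\psi$ is nondecreasing in its first and nonincreasing in its second argument, $\bar\Omega$ inherits these monotonicities; moreover $\psi(r(1+u), t - v\min(t,1)) \geq \psi(r, t)$ pointwise, so $\bar\Omega \geq \psi$. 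The identity $\psi(0, \cdot) \equiv 0$ gives $\bar\Omega(0, \cdot) \equiv 0$, and dominated convergence (with the integrable majorant $\alpha(2r)$) yields $\lim_{t\to\infty}\bar\Omega(r, t) = 0$ for every $r \geq 0$.

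The key technical step is verifying joint continuity of $\bar\Omega$. Since $\psi$ is separately monotone, it is continuous Lebesgue-almost everywhere; this follows from the fact that the discontinuity set of such a function is contained in a countable union of graphs of monotone maps, each of two-dimensional Lebesgue measure zero. For any $(r_0, t_0)$ with $r_0, t_0 > 0$, the affine map $(u, v) \mapsto (r_0(1+u),\, t_0 - v\min(t_0, 1))$ is a homeomorphism onto its image, so the set of $(u, v) \in [0,1]^2$ where $\psi$ is discontinuous at the parametrized point has Lebesgue measure zero in $[0,1]^2$. Dominated convergence (with the local majorant $\alpha(2(r_0+1))$) then gives $\bar\Omega(r, t) \to \bar\Omega(r_0, t_0)$ as $(r, t) \to (r_0, t_0)$. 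The boundary cases $r_0 = 0$ and $t_0 = 0$ follow similarly, using $\psi(0, \cdot) \equiv 0$ and the continuity of $\psi(\cdot, 0)$ at $0$.

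Finally, set $\beta(r, t) := \bar\Omega(r, t) + \alpha(r) e^{-t}$. The correction term is continuous, strictly increasing in $r$ (since $\alpha \in \Kinf$), strictly decreasing in $t$, and vanishes at $r = 0$. Hence $\beta(0, t) = 0$, $\beta(\cdot, t) \in \K$ for each $t \geq 0$, $\beta(r, \cdot) \in \LL$ for each $r > 0$, and therefore $\beta \in \KL$, with $\beta \geq \bar\Omega \geq \psi$. The principal obstacle is the joint-continuity verification in the third step; everything else is a routine check.
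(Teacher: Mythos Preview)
Your integral-smoothing approach is genuinely different from the paper's mesh/piecewise-linear construction and, modulo one gap, is a nice alternative. The almost-everywhere continuity of a coordinatewise monotone function is correct (after the substitution $s=-t$ one obtains a function nondecreasing in both variables; along each line of slope $1$ this is monotone, hence has countably many jumps, and Fubini finishes), and for interior points $(r_0,t_0)$ with $r_0,t_0>0$ your dominated-convergence argument for continuity of $\bar\Omega$ is sound.

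The gap is at the boundary $t_0=0$. There the change of variables $(u,v)\mapsto\big(r_0(1+u),\,t_0-v\min(t_0,1)\big)$ degenerates to a segment, so the a.e.\ argument no longer applies, and in fact $\bar\Omega$ need not be continuous there. Take $\psi(r,t):=r$ for $t=0$ and $\psi(r,t):=0$ for $t>0$. This $\psi$ satisfies all hypotheses (nondecreasing in $r$, nonincreasing in $t$, $\psi(0,\cdot)\equiv 0$, $\lim_{t\to\infty}\psi(r,t)=0$, and $r\mapsto\psi(r,t)$ is continuous at $0$ for every $t$). Then
\[
\bar\Omega(r,0)=\int_0^1 \psi\big(r(1+u),0\big)\,du=\tfrac{3r}{2},\qquad
\bar\Omega(r,t)=0\ \ \text{for every }t>0,
\]
since for $t\in(0,1]$ the integrand vanishes for all $v<1$. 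Hence $\bar\Omega$---and therefore your $\beta=\bar\Omega+\alpha(r)e^{-t}$---has a jump at $t=0$ and is not in $\KL$. Your sentence ``the boundary cases $r_0=0$ and $t_0=0$ follow similarly'' is where the argument breaks.

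The fix is easy: since $\psi(r,t)\le\alpha(r)$ everywhere, freeze the smoothing away from $t=0$ and let the correction term carry the bound for small $t$. For instance,
\[
\beta(r,t):=\bar\Omega\big(r,\max(t,1)\big)+3\,\alpha(r)\,e^{-t}
\]
is continuous (because $\bar\Omega$ is continuous on $\{t\ge 1\}$), is in $\KL$, and dominates $\psi$: for $t\ge 1$ use $\bar\Omega\ge\psi$, while for $t\in[0,1]$ one has $3\alpha(r)e^{-t}\ge 3e^{-1}\alpha(r)>\alpha(r)\ge\psi(r,t)$. With this modification your construction goes through; as written, however, the claimed continuity at $t_0=0$ is false.
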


  \begin{proof}
  Pick a sequence $R:=\{R_k\}_{k\in\Z} \subset (0,+\infty)$ satisfying the properties (i)-(iii) of Proposition~\ref{prop:Partition_of_Reals} and
  another sequence $\tau:=\{\tau_m\}_{m\in\N} \subset [0,+\infty)$
  satisfying the properties (ii)-(iii) of
  Proposition~\ref{prop:Partition_of_Reals} and so that $\tau_0=0$. The Cartesian product $R\times\tau$ defines a mesh over $\R_+\times\R_+$.
  Let $\omega\in\KL$ be arbitrary so that $\omega(r,t) > 0$ for all $(r,t)
  \in (0,\infty) \times \R_+$.

  For each $k\in\Z$ and $m\in\N$, $m\neq 0$ define
  \[
  \beta(R_k,\tau_m):=\psi(R_{k+1},\tau_{m-1}) + \omega(R_{k+1},\tau_{m-1}).
  \]
  For $m=0$ define
  \[
  \beta(R_k,0)=\beta(R_k,\tau_0):=2\psi(R_{k+1},0) + \omega(R_{k+1},0).
  \]
  For each $k\in\Z$ and $m\in\N$ define $\beta(r,t)$ for all $(r,t)$ in
  the triangles with corner points ($(R_{k+1},\tau_{m})$, $(R_{k},\tau_{m+1})$, $(R_{k},\tau_{m})$) or
  ($(R_{k+1},\tau_{m})$, $(R_{k+1},\tau_{m+1})$, $(R_{k},\tau_{m+1})$)
  by linear interpolation of the values in the corner points (which have
  already been defined).
  This defines the values of $\beta$ in $(0,+\infty)\times[0,+\infty)=\bigcup_{k\in\Z}\bigcup_{m\in\N}[R_k,R_{k+1}]\times[\tau_k,\tau_{k+1}]$.

  Defining $\beta(0,t):=0, t\geq 0,$ we see that $\beta$ is defined over $\R_+\times\R_+$, is continuous, strictly increasing in the first argument and decreasing in the second argument. Since $\lim_{t\to\infty}\psi(r,t)= 0$ for any $r\geq 0$, it holds also that 
  $\lim_{t\to\infty}\beta(r,t)= 0$ for any $r\geq 0$. Overall, $\beta\in\KL$.

  It remains to show that $\beta$ estimates $\psi$ from above.
  To see this, pick any $k\in\Z$ and $m\in\N$. For every $(r,s)\in [R_k,R_{k+1}]\times[\tau_m,\tau_{m+1}]$ we have:
  \begin{eqnarray*}
  \beta(r,s) - \psi(r,s) &\geq& \beta(R_k,\tau_{m+1}) - \psi(R_{k+1},\tau_{m}) \\
  &=& \psi(R_{k+1},\tau_{m}) + \omega(R_{k+1},\tau_{m}) - \psi(R_{k+1},\tau_{m})\\
  &=& \omega(R_{k+1},\tau_{m}).
  \end{eqnarray*}
  Hence, $\beta(r,s) \geq \psi(r,s)$ for all $r,s\geq0$.  
  \end{proof}

\begin{acknowledgements}
This research has been supported by the German Research Foundation (DFG) within the project 
``Input-to-state stability and stabilization of distributed parameter systems'' (grant Wi 1458/13-1).
\end{acknowledgements}


\begin{thebibliography}{10}

\bibitem{AnS99}
D.~Angeli and E.~D. Sontag.
\newblock {Forward completeness, unboundedness observability, and their
  Lyapunov characterizations}.
\newblock {\em Systems \& Control Letters}, 38(4-5):209--217, 1999.

\bibitem{Chu15}
I.~Chueshov.
\newblock {\em Dynamics of Quasi-Stable Dissipative Systems}.
\newblock Springer International Publishing, 2015.

\bibitem{CuZ95}
R.~F. Curtain and H.~Zwart.
\newblock {\em An Introduction to Infinite-Dimensional Linear Systems Theory}.
\newblock Springer-Verlag, New York, 1995.

\bibitem{Dat70}
R.~Datko.
\newblock {Extending a theorem of A. M. Liapunov to Hilbert space}.
\newblock {\em Journal of Mathematical Analysis and Applications},
  32(3):610--616, 1970.

\bibitem{Hah67}
W.~Hahn.
\newblock {\em Stability of Motion}.
\newblock Springer-Verlag, New York, 1967.

\bibitem{HaV93}
J.~K. Hale and S.~M. Verduyn~Lunel.
\newblock {\em Introduction to Functional-Differential Equations}, volume~99 of
  {\em Applied Mathematical Sciences}.
\newblock Springer-Verlag, New York, 1993.

\bibitem{Hen81}
D.~Henry.
\newblock {\em Geometric Theory of Semilinear Parabolic Equations}.
\newblock Springer-Verlag, Berlin, 1981.

\bibitem{JMP18}
B.~Jacob, A.~Mironchenko, J.~R. Partington, and F.~Wirth.
\newblock {Remarks on input-to-state stability and non-coercive Lyapunov
  functions}.
\newblock In {\em Submitted to 57th IEEE Conference on Decision and Control,
  Miami Beach, USA}, 2018.

\bibitem{KaJ11b}
I.~Karafyllis and Z.-P. Jiang.
\newblock {\em Stability and Stabilization of Nonlinear Systems}.
\newblock Springer-Verlag, London, 2011.

\bibitem{Kel14}
C.~M. Kellett.
\newblock A compendium of comparison function results.
\newblock {\em Mathematics of Control, Signals, and Systems}, 26(3):339--374,
  2014.

\bibitem{Kel15}
C.~M. Kellett.
\newblock Classical converse theorems in {L}yapunov's second method.
\newblock {\em Discrete and Continuous Dynamical Systems - Series B},
  20(8):2333--2360, 2015.

\bibitem{KeD16}
C.~M. Kellett and P.~M. Dower.
\newblock Input-to-state stability, integral input-to-state stability, and
  $l_2$-gain properties: qualitative equivalences and interconnected systems.
\newblock {\em IEEE Transactions on Automatic Control}, 61(1):3--17, 2016.

\bibitem{Kur56}
J.~Kurzweil.
\newblock {On the inversion of Lyapunov's second theorem on stability of
  motion}.
\newblock {\em Czechoslovak Mathematical Journal}, 81:217--259, 455--484, 1956.

\bibitem{LSW96}
Y.~Lin, E.~D. Sontag, and Y.~Wang.
\newblock A smooth converse {L}yapunov theorem for robust stability.
\newblock {\em SIAM Journal on Control and Optimization}, 34(1):124--160, 1996.

\bibitem{Lit89}
W.~Littman.
\newblock A generalization of a theorem of {D}atko and {P}azy.
\newblock In {\em Advances in Computing and Control}, volume 130 of {\em
  Lecture Notes in Control and Information Sciences}, pages 318--323. Springer,
  1989.

\bibitem{Mas56}
J.~L. Massera.
\newblock Contributions to stability theory.
\newblock {\em Annals of Mathematics}, 64(1):182--206, 1956.

\bibitem{Mir17a}
A.~Mironchenko.
\newblock Uniform weak attractivity and criteria for practical global
  asymptotic stability.
\newblock {\em Systems \& Control Letters}, 105:92--99, 2017.

\bibitem{MiW17d}
A.~Mironchenko and F.~Wirth.
\newblock {A non-coercive Lyapunov framework for stability of distributed
  parameter systems}.
\newblock In {\em {Proc. of the 56th IEEE Conference on Decision and Control}},
  pages 1900--1905. IEEE, 2017.

\bibitem{MiW17a}
A.~Mironchenko and F.~Wirth.
\newblock Non-coercive {L}yapunov functions for infinite-dimensional systems.
\newblock {\em arxiv.org/abs/1612.06575}, 2017.
\newblock Submitted to Journal of Differential Equations.

\bibitem{MiW18b}
A.~Mironchenko and F.~Wirth.
\newblock Characterizations of input-to-state stability for
  infinite-dimensional systems.
\newblock {\em IEEE Transactions on Automatic Control}, 63(6):1602--1617, 2018.

\bibitem{PeJ06}
P.~Pepe and Z.-P. Jiang.
\newblock {A Lyapunov-Krasovskii methodology for ISS and iISS of time-delay
  systems}.
\newblock {\em Systems \& Control Letters}, 55(12):1006--1014, 2006.

\bibitem{Sak47}
S.~Saks.
\newblock {\em Theory of the Integral}.
\newblock Dover Publications, Mineola, NY, 2005.
\newblock reprint of the 2nd rev. ed. (1937).

\bibitem{Sch02}
R.~Schnaubelt.
\newblock Feedbacks for nonautonomous regular linear systems.
\newblock {\em SIAM Journal on Control and Optimization}, 41(4):1141--1165,
  2002.

\bibitem{Son98}
E.~D. Sontag.
\newblock {Comments on integral variants of ISS}.
\newblock {\em Systems \& Control Letters}, 34(1-2):93--100, 1998.

\bibitem{SoW96}
E.~D. Sontag and Y.~Wang.
\newblock {New characterizations of input-to-state stability}.
\newblock {\em IEEE Transactions on Automatic Control}, 41(9):1283--1294, 1996.

\bibitem{Sza65}
J.~Szarski.
\newblock {\em Differential Inequalities}.
\newblock Polish Sci. Publ. PWN, Warszawa, Poland, 1965.

\bibitem{TPL02}
A.~Teel, E.~Panteley, and A.~Lor{\'\i}a.
\newblock Integral characterizations of uniform asymptotic and exponential
  stability with applications.
\newblock {\em Mathematics of Control, Signals and Systems}, 15(3):177--201,
  2002.

\bibitem{Yos66}
T.~Yoshizawa.
\newblock {\em Stability Theory by Liapunov's Second Method}.
\newblock Math. Soc. Japan, 1966.

\bibitem{Zam66}
G.~Zames.
\newblock {On the input-output stability of time-varying nonlinear feedback
  systems Part one: Conditions derived using concepts of loop gain, conicity,
  and positivity}.
\newblock {\em IEEE Transactions on Automatic Control}, 11(2):228--238, 1966.

\end{thebibliography}

\end{document}